\newcommand{\tcb}{\textcolor{black}}
\newcommand{\tcbl}{}
\newcommand{\tcg}{}
\newtheorem{prop}{Proposition}
\newtheorem{lemme}{Lemma}
\newtheorem{defi}{Definition}
\DeclareMathOperator*{\argmax}{arg\,max}
\DeclareMathOperator*{\argmin}{arg\,min}
\newcommand{\up}{\Pi_{\text{up}}}
\newcommand{\mi}{\Pi_{\text{mid}}}
\newcommand{\MI}{\overline{\Pi_{\text{mid}}}}
\newcommand{\x}{P}
\newcommand{\y}{\alpha}
\newcommand{\X}{\mathcal{P}}
\newcommand{\Y}{\mathcal{A}}
\newcommand{\epsmi}{\varepsilon_{\text{mid}}}
\newcommand{\hub}{\mathcal{H}_{\text{cso}}}
\newcommand{\relaxed}{(21)}
\newcommand{\alphak}{(22)}
\begin{document}
\title{Hierarchical coupled driving-and-charging model of electric vehicles, stations and grid operators}

\author{
    
Benoit Sohet, Yezekael Hayel, Olivier Beaude and Alban Jeandin

\thanks{B. Sohet and Y. Hayel are with LIA/CERI, Univ. of Avignon, 84911, Avignon, FRANCE (e-mail: yezekael.hayel@univ-avignon.fr).}

\thanks{B. Sohet, O. Beaude are with EDF R\&D, OSIRIS Dept, EDF Lab' Paris-Saclay, 91120, Palaiseau, FRANCE (e-mail: \{benoit.sohet, olivier.beaude\}@edf.fr).}

\thanks{A. Jeandin is with Izivia, EDF group, 92419, Courbevoie, FRANCE (e-mail: alban.jeandin@izivia.com).\vspace{0.1 cm}}

}
\maketitle

\begin{abstract}
The decisions of operators from both the transportation and the electrical systems are coupled due to Electric Vehicles' (EVs) actions.
Thus, decision-making requires a model of several interdependent operators and of EVs' both driving and charging behaviors.
Such a model is suggested for the electrical system in the context of commuting, which has a typical trilevel structure.
At the lower level of the model, a congestion game between different types of vehicles gives which driving paths and charging stations (or hubs) commuters choose, depending on travel duration and energy consumption costs. 
At the middle level, a Charging Service Operator sets the charging prices at the hubs to maximize the difference between EV charging revenues and electricity supplying costs.
These costs directly depend on the supplying contract chosen by the Electrical Network Operator at the upper level of the model, whose goal is to reduce grid costs.
This trilevel optimization problem is solved using an optimistic iterative algorithm and simulated annealing.
The sensitivity of this trilevel model to exogenous parameters such as the EV penetration and an incentive from a transportation operator is illustrated on realistic urban networks.
This model is compared to a standard bilevel model in the literature (only one operator).
\end{abstract}

\begin{IEEEkeywords}
Electric vehicles, Trilevel optimization, Smart charging, Coupled transportation-electrical systems
\end{IEEEkeywords}

\section{Introduction}
\tcg{
Electric Vehicles (EVs) are a promising solution to reduce greenhouse gas emissions and local pollution (air quality, noise).
Considering policies and targets around the world, EVs should account for 7~\% of the global vehicle fleet by 2030~\cite{EVoutlook}.
This represents an opportunity for the different stakeholders of electric mobility, but also challenges for the grid: in France for example, standard predictions give a 2.2 to 3.6 GW power demand increase during winter peak periods in 2035~\cite{RTE}.
Challenges already arise nowadays due to significant local penetrations of EVs\footnote{More than 80,000 EVs in circulation in Paris region:~\url{https://www.statistiques.developpement-durable.gouv.fr/sites/default/files/2020-04/immatriculations_neuves_2019.zip}}, which may lead to local grid constraints and infrastructure investment costs.
Therefore, decision-making models are needed to help electric mobility operators with their infrastructure investments and pricing mechanisms, which exploit EV flexibility, in particular during charging.
}

\tcg{
Note that key components of the charging operation of EVs depend on their driving strategies, like the charging place and hours.
The driving and charging decisions of EV users (here referred to as ``EVs") are thus interdependent, which couples the electrical and the transportation systems, especially in urban networks.
This coupling is easily conceivable when during widespread holidays departures most of driving EVs need to charge at public charging stations, where there could
be significant waiting time and reduction of available power.
Therefore, due to EVs, infrastructure and pricing strategies of an operator of the transportation or the electrical system not only have an impact on the other operators of the same system, but also on the operators of the other system.
For example, Park \& Ride hubs installed at a city's outskirt by local authorities to mitigate traffic congestion and pollution\footnote{20,000 parking spaces at Paris gates:~\url{https://data.iledefrance-mobilites.fr/explore/dataset/parcs-relais-idf/}} are also an opportunity for ``smart charging".
}

\tcb{
Models of this coupled electricity-transportation system are suggested in works identified in the review paper\cite{wei2019interdependence} and in more recent papers.
In~\cite{zhang2020power} and~\cite{ammous2019joint}, some operator controls an EV fleet and solves the vehicle routing problem to minimize both EVs costs (travel and charging duration and cost) and grid costs.
Some papers focus instead on independent EV drivers who learn the optimal driving path and charging station to stop, like in~\cite{qian2019deep} or in one of our previous works~\cite{sohet2020learning}.
Review paper\cite{wei2019interdependence} distinguishes between expansion planning of charging stations as in paper~\cite{yao2014multi}, and coordinated operation of a fixed coupled electricity-transportation system, such as the present paper.
Among coordinated operation papers whose goal is to design price incentives, some are based on a real-time model of EVs like~\cite{tan2017}.
This often entails simplifications such as electrical grid constraints neglected in~\cite{li2020price}, or a coarse zone model of the transportation network in~\cite{yang2021dynamic}.
}

\tcb{
The present paper adopts a stationary EV model point of view in order to better focus on operators' long-term incentives, like the ones presented in the coordinated operation papers~\cite{alizadeh2016optimal,wei2017network} mentioned in review~\cite{wei2019interdependence}, and in more recent papers~\cite{shi2020distributed,qian2020enhanced}.
At the lower level, EVs behavior is modeled as the equilibrium of a driving-and-charging game:
EVs choose the resources (driving path, charging station\dots) with minimal costs -- either financial (traffic tolls, charging cost) or temporal (travel duration, queuing and charging times) -- which are function of the other EVs' strategies, due to congestion effects.
At the upper level, an  urban planner from the transportation and/or the electrical system incites these EVs through pricing mechanisms to adopt ``optimal" behavior. 
}

\tcb{
However, the reduction in the literature of the electrical system's management to one type of operator is particularly unrealistic.
Concerning electric mobility, the electrical operators carry out two main functions: the Charging Service to EVs (guaranteed by Operators called CSOs) and the management of the Electrical Network (done by the ENO).
In this work, a CSO brings together both the charge point operator in charge of the station and the mobility service provider which deals with the EV customers, and the ENO is both the grid manager and the electricity provider.
In the previously mentioned papers, smart charging pricing is chosen to optimize either the ENO's~\cite{shi2020distributed} or the CSOs payoff~\cite{tan2017}, but the interaction between CSO and ENO is not considered.
In this work, we use instead a trilevel setting, with the EVs at the lower level, the CSOs at the middle one and the ENO at the upper level.
\tcbl{
In a future work, we will consider the interaction between several CSOs on top of EVs' game, as in papers~\cite{tan2017} and~\cite{yuan2015competitive}.
Other works such as~\cite{wu2015game, vagropoulos2015real} also consider several CSOs, but in a futuristic electricity market environment rather than the current realistic framework of CSOs buying electricity from suppliers (the ENO in the present paper).
}
}

In electrical systems, trilevel frameworks are commonly employed in cyber security~\cite{alguacil2014trilevel}, expansion planning~\cite{jin2013tri} or demand-side management~\cite{aussel2020trilevel}, but to our knowledge, only two papers on electric mobility use a trilevel setting.
In~\cite{shakerighadi2018hierarchical}, the ENO chooses the wholesale electricity prices for each charging station.
Each station charges its EVs, which only choose the charging quantity depending on the local retail electricity price set by the CSO of the corresponding station.
Due to the simple formulations of the three levels objective functions (no game between EVs), this trilevel setting is easily solved analytically.
In~\cite{alizadeh2018retail}, EVs choose a driving path, a station and a charging quantity.
The CSOs choose the local retail prices in order to minimize their costs (the electricity bought from the ENO) and the time EVs spend on the road.
The ENO chooses the local wholesale prices for each station to minimize its costs (related to electrical grid constraints) and the time EVs spend on the road.
Note that the lower level is not a game but simply an optimization problem as there is no interaction between EVs.
The trilevel optimization is solved iteratively: the ENO updates the wholesale prices, then the CSO uses an analytical expression to compute the optimal retail prices.
The theoretical and algorithmic details are not specified in this work.

\tcb{
The contributions of this paper can be summarized as follows.
First note that, although they were two original contributions of our previous paper~\cite{sohet2020coupled}, this work still relies on two features which are unique in the coupled electrical-transportation literature:
\begin{itemize}
    \item considering commuting and EVs charging during a whole working day gives the possibility for smart charging mechanisms on top of pricing incentives;
    \item a charging price at a given hub which depends on the smart charging load at this hub.
    This price is a congestion cost function which can be nonseparable (i.e., not only depends on congestion nearby, but on all over the network) thus requiring new theoretical results to study the uniqueness of the equilibrium of EVs' game.
\end{itemize}
The original contributions of the present paper are:
\begin{enumerate}
\item a realistic model of commuting and charging at work using a trilevel setting, intended for and solved by the ENO, at the upper level.
The CSO and ENO maximize their payoffs using realistic pricing mechanisms and EVs interact both while driving and charging in a coupled game;
  \item a new theoretical proof of the unique aggregated charging need at each hub at the equilibrium of the coupled routing-and-charging game between EVs.
  \item a carefully designed iterative algorithm solving the trilevel model using simulated annealing, Brent's method and convex optimization, with a theoretical proof of the global algorithm's convergence;
  \item sensitivity results on a realistic setting and a comparative study of our trilevel model with a bilevel setting (ENO and CSO combined together in a unique operator using Locational Marginal Pricing), standard in the literature~\cite{alizadeh2016optimal, wei2017network}.
\end{enumerate}
}
The paper is organized as follows.
The objectives and available strategies of the three types of agents considered (EVs, CSO and ENO) are introduced in Sec.~\ref{sec:trilevel_model}.
The theoretical trilevel model of the interactions between these agents is given in Sec.~\ref{sec:theory}.
An algorithmic solution of this trilevel optimization problem is studied in Sec.~\ref{sec:algorithm} and applied in Sec.~\ref{sec:numerical} to examine the sensitivity of our model to exogenous parameters and compare it to the standard model in literature.
Finally, conclusions and perspectives are given in last section.

\begin{table}[]
\centering
\caption{Table of main notations}
\begin{tabular}{cl}
\multicolumn{2}{l}{\textbf{Abbreviations}}\\
\hline
CSO & Charging Service Operator \\
ENO & Electrical Network Operator\\
EV / GV & Electric / Gasoline Vehicles\\
Hub & Park \& Ride charging station\\
LMP & Locational Marginal Price\\
P\&C & Plug and Charge\\
PT & Public Transport\\
SC & Smart Charging\\
WE & Wardrop Equilibrium
\vspace{1mm}\\
\multicolumn{2}{l}{\textbf{Parameters}}\\
\hline
$i_r$& Parking (and charging) hub associated to path $r$\\
$r_S$& Path $r$ + charging at hub\\
$r_H$& Path $r$ + charging later (e.g., at home)\\
$\hub$& Set of CSO's hubs\\
$e_1$& EV class that can charge at hub or later\\
$e_0$& EV class that can only charge at hub\\
$X_e$& EV penetration\\
$t_i$& PT fare from hub $i$ to destination\\
$\lambda_i$& Charging unit price~\eqref{eq:LMP} at CSO's hub $i\in\hub$\\
$\lambda^0_S$& Constant charging unit price at city's hub $i\in \mathcal{H}$\textbackslash$\hub$\\
$\lambda^0_H$& Constant charging unit price at home\\
$L_i$& Charging need aggregated over all EVs charging at hub $i$\\
$\mi$& CSO's objective (charging revenues $-$ supply contract)\\
$\up$& ENOS's objective (supply contract $-$ grid costs)
\vspace{1mm}\\
\multicolumn{2}{l}{\textbf{Variables}}\\
\hline
$x_{s,r}$& Flow rate of vehicle class $s$ on path $r$\\
$x_{s,a}$& Flow rate of vehicle class $s$ on arc $a$ ($=\sum_{\{r \text{ s.t. } a\in r\}} x_{s,r}$)\\
$\ell_{i,t}$& Aggregated charging power at hub $i$ and time slot $t$\\ 
$\alpha$& Charging unit price magnitude (CSO's decision variable)\\
$P$& Elec. supplying contract threshold (ENO's decision variable)
\end{tabular}
\label{tab:notations}
\end{table}

\section{A smart coupled driving-and-charging model with three types of actors}
\label{sec:trilevel_model}

The smart charging use case considered in this work is about commuting:
drivers, coming from different places, choose their path to get to their workplace, which are all located in a same city or urban area.
In this city, there are Park \& Ride hubs where EV users may leave their car charging during working hours, and finish the commuting to their workplace by foot or public transport.
In addition to drivers, there are two other types of agents/operators considered in this system:
\begin{itemize}
    \item The CSOs which are in charge of several hubs and decide the corresponding smart charging fares;
    \item The ENO which is in charge of the grid of the city considered (assumed to be a medium-voltage one) and which specifies the electricity supply contract with CSOs.
\end{itemize}
\tcb{
Note that the operators do not control vehicles (in the sense of Vehicle Routing Problems) but only send incentives to influence both the driving and charging decisions of drivers (who interact through congestion effects in the sense of routing games).
}

%
\subsection{Vehicle users: a coupled driving-and-charging decision}

The transportation network is modeled by a graph in which each arc represents a street (illustrated in Fig.~\ref{fig:schema}).
Here a path $r$ refers to the successive arcs used to go from an origin $O$ to the hub $i_r$ chosen to park the vehicle, and also includes the public transport arc connecting $i_r$ to the workplace destination $D$.
Vehicle users have to choose one of the path to go from their origin to their destination, depending on the commuting duration and on the energy consumption costs.

\tcg{
Vehicles are of two distinct types: EVs (index $e$) and Gasoline Vehicles (GVs, index $g$) which rely on thermal engines.
EVs are split into two classes: EVs in class $e_1$, when choosing a path $r$, can either decide to charge at hub $i_r$ during working hours (fictitious path denoted $r_S$), or only park there and charge later, e.g. at home (path $r_H$). 
EVs in class $e_0$ do not have enough energy (their State of Charge, or SoC, is low) to go home after work and will automatically choose to charge at the hub (path $r_S$).}
Vehicles of a same class ($g$, $e_0$ or $e_1$) share the same costs, but more vehicle classes could be considered in order to distinguish for example pure EVs from plug-in hybrid vehicles.

The duration cost of a path $r$ is the same for all vehicle classes and is made of two parts.
The first one reflects congestion on each road $a$ composing path $r$ following the Bureau of Public Road (BPR) function~\cite{BPR}: 
\begin{equation}
    d_a(x_a) = \tau \frac{l_a}{v_a} \left(1+2\left(\frac{x_a}{C_a}\right)^4\right)\,,
\end{equation}
with $x_a = x_{g,a}+x_{e_0, a}+x_{e_1, a}$ the total flow of vehicles of all classes on arc $a$, $l_a$ its length, $v_a$ the corresponding speed limit and $C_a$ its capacity. \tcb{The internal parameters of the BPR function are determined in accordance with \cite{jeihani2006improving} for urban area congestion measures.}
The value of time $\tau$ transforms the travel duration into a monetary cost.
Note that this congestion cost depends on the drivers path choice through variable $x_a$.
The second part of the duration cost is a constant $t_i$ representing, if any, the time (expressed as a cost) to go from the hub $i$ where a vehicle is parked to its workplace. 
Note that other constant costs can be added to $t_i$ like public transport fares.

\begin{figure}
    \centering
    \includegraphics[width = 0.5\textwidth]{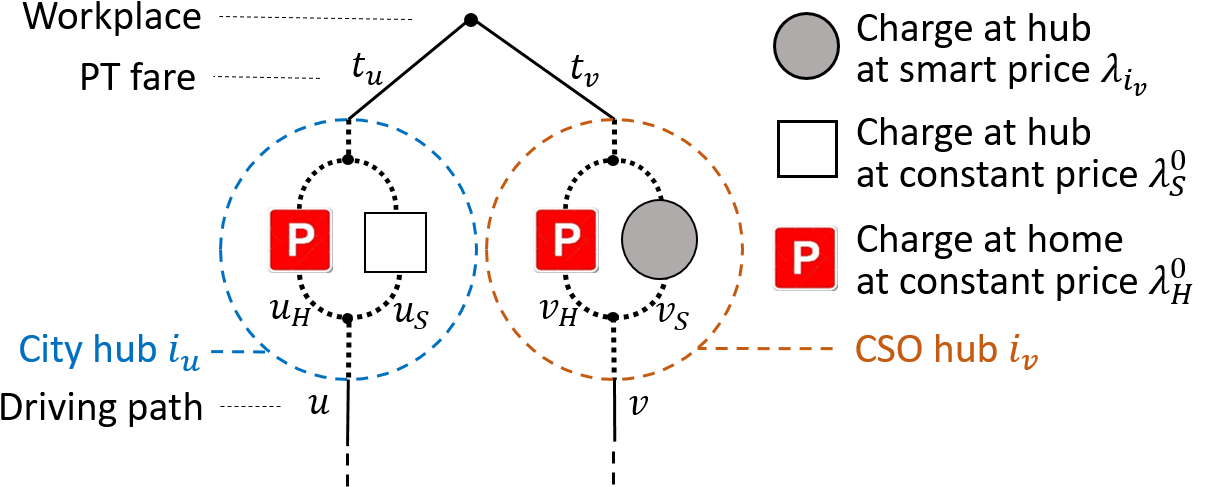}
    \caption{\tcb{Illustration of a transportation network.
    \textit{
    Each path $r \in \{u,v\}$ includes the driving path to get to the hub $i_r$ associated to $r$ and the PT fare $t_r$ to go from hub $i_r$ to the workplace.
    At hub $i_r$, it is possible to only park there and charge later at constant price $\lambda^0_H$ (the corresponding global path is written $r_H$); or, it is possible to charge at the hub ($r_S$).
    Considering the latter decision, the charging price at hub $i_r$ is constant ($\lambda^0_S$) if the hub is managed by the City authority (like hub $i_u$).
    Otherwise, if the hub is managed by the CSO (like hub $i_v$) then the charging price $\lambda_{i_v}$ is smartly designed by the CSO.}}}
    \label{fig:schema}
\end{figure}

\tcg{
The second type of cost for drivers is related to energy consumption.
The charging fare at hub $i$ is more precisely a charging unit price $\lambda_i$, i.e. per unit of energy used, and is specified in the next section.
EVs deciding to charge during working hours will be charged up to full SoC. More precisely, the amount of energy EVs of class $e_j$ charge at the hub is equal to the energy consumed while driving to their workplace, plus the difference $s_j$ between full SoC and the SoC before the morning trip.} The former quantity of energy is assumed to depend only on the travelled distance, i.e. the energy $m_s$ consumed by a vehicle of class $s$ per distance unit is constant.
Thus, EVs of class $e_j$ charging at the hub $i_r$ of path $r$ have to pay:
\begin{equation}
    \ell_{e_j, r}\times \lambda_{i_r}\,,\quad\text{with } ~~ \ell_{e_j, r} = \left(l_r m_e + s_j\right) \,,
\end{equation}
where $l_r$ is the total length of path $r$.
\tcb{
Then, the energy consumed by an EV on path $r$ is approximated by the product $l_rm_e$. A more realistic consumption model -- which also depends on the driving speed and exogenous weather conditions (for auxiliary consumption) -- would be an interesting follow-up of this work.
}
\tcg{It is assumed that EVs which do not charge at the hubs also take into account a consumption cost: $\ell_{e_j,r} \lambda^0_H\,,$ with $\lambda^0_H$ a constant corresponding to the charging unit price at home for example.
Similarly, the consumption cost for GVs is $\ell_{g,r} \lambda_g$ with $\ell_{g,r}=l_r m_g$.}
The total cost for a vehicle of class $s$ choosing path $r$ is:
\begin{equation}
    c_{s, r}(\bm{x}) = \sum_{a\in r} d_a(x_a) + t_{i_r} + \ell_{s, r} \lambda\,,
    \label{eq:costs}
\end{equation}
where $\lambda$ is equal to $\lambda_g$ if $s=g$, $\lambda_H^0$ if $s=e_1$ and $r=r_H$ or $\lambda_{i_r}$ if $s=e_j$ and $r=r_S$.

The interaction between drivers through congestion effects constitutes a nonatomic multiclass congestion~\cite{JIANG14} game $\mathbb{G}$ with nonlinear cost functions $\bm{c} = \left(c_{s,r}\right)$ defined in~\eqref{eq:costs}.
In such frameworks, the vehicle users reach a particular distribution of choices between the possible paths, called a Wardrop Equilibrium (WE), where no user has an interest to change
her choice unilaterally:

\begin{defi}[\textbf{Wardrop Equilibrium~\cite{Wardrop52}}]\label{WE}
The global vehicle flow $\bm{x}^*$ is a Wardrop Equilibrium (WE) if and only if:
\begin{equation}
\forall s \in \{g, e_0,e_1\}\,, \quad c_{s,r}(\bm{x}^*)\leq c_{s, r'}(\bm{x}^*),
\end{equation}
for all paths $r, r'$ with $r$ such that $x^*_{s,r}>0$.
\label{defi:we}
\end{defi}

\noindent
The charging unit price $\lambda_i$ at CSO's hub $i\in\hub$ is a congestion cost determined by the CSO and is specified in the next section.

\subsection{Charging Service Operator: sets charging price}
\label{sec:CSO}
\tcg{
A CSO adapts the charging unit prices at its hubs in order to maximize the difference between its revenues from EV charging and its electricity supplying costs.
Here, it is supposed that there is only one CSO in the city to avoid a complex competition between several CSOs, which will be the focus of a future work.
More precisely, this CSO does not own all the hubs of the city, otherwise it could set arbitrarily high prices and EVs of class $e_0$ would have no choice but to pay these prices.
Instead, some hubs belong to the city for example with a constant charging unit price $\lambda^0_S$, supposed higher than $\lambda^0_H$, the one available at home.
The set of all hubs is denoted $\mathcal{H}$ and the set of the CSO's hubs, $\mathcal{H}_{\text{cso}}$.}

\tcg{
At its hubs, the CSO determines the charging profile over working hours aggregated over all EVs such that their SoC is full at the end of the day.
The charging unit price $\lambda_i$ at each CSO's hub $i$ can be lower than the one at city's hubs, $\lambda^0_S$. 
More precisely, $\lambda_i$ depends on the total charging need of EVs at CSO's hub $i$ and other electricity usages called nonflexible because of their nonshiftable operation. 
This nonflexible term corresponds for example to the consumption of a shopping mall attached to the hub.
The CSO schedules EV charging in order to smooth the power load at its hubs and therefore reduce its electricity supplying costs (see next section for details). 
For each CSO's hub, the aggregated charging need is scheduled using a water-filling algorithm introduced in~\cite{mohsenian10}. 
The CSO sets the charging unit prices $\lambda_i$ based on the output of this algorithm. 
}

The working hours are divided into $T$ discrete time slots.
Without loss of generality, these time slots have the same duration of a time unit.
For each hub $i$, the CSO has to determine the energy $\ell_{i,t}$ which is charged during time slot $t$.
Assuming that the charging power is constant during each time slot,  $\ell_{i,t}$ also represents the constant power load of the charging operation at time slot $t$, as the duration of a time slot is a time unit.
The total charging need $L_i$ at hub $i$ is equal to:
\begin{equation}
    L_i(\bm{x_e}) = \sum_r \delta_{i_r,i}\sum_{j=0,1} x_{e_j, r_S} ~\ell_{e_j,r}\,,
\end{equation}
with $\delta_{i_r,i}=1$ or 0 whether or not the destination hub $i_r$ associated to path $r$ is hub $i$, and $x_{e_j, r_S}$ the flow of EVs of class $e_j$ choosing path $r$ and charging at hub $i$ at the end of the path.
Each hub $i$ has its own nonflexible consumption of electricity $\ell^0_{i,t}$ at time slot $t$.
Note that this nonflexible term can include local electricity production and be negative, but here it is supposed that $\ell^0_{i,t}\geq 0$ to simplify notations.
The water-filling algorithm minimizes a quadratic proxy~\cite{mohsenian10} of the total load at hub $i$ while making sure that all EVs leave the hub with full SoC:
\begin{equation}
    G^*_i= \min_{\left(\ell_{i,t}\right)} \sum_{t=1}^T \left(\ell_{i,t}+\ell_{i,t}^0\right)^2\
    \quad \text{s.t.} \sum_{t=1}^T \ell_{i,t} = L_i\,.
    \label{eq:local}
\end{equation}
\tcb{Note that the Vehicle to Grid (V2G) technology can be integrated into this model by allowing $\ell_{i,t}$ to be negative, while making sure that at each time slot the aggregated SoC remains inside tangible bounds\footnote{\tcb{Min/max capacity of the ``aggregated battery'' connected to a given hub, where the max. bound is the sum of capacities of individual EVs plugged-in.}}.
However we chose not to consider V2G because in general injected electricity is not compensated financially yet and may be potentially harmful for the local distribution grid.
Note also that battery health limitations (depth of discharge, number of cycles\dots) cannot be integrated as it is because only the aggregated charging loads at each hub are modeled. 
A way to consider it however should be to assign a different charging power limit to each EV class depending on its initial SoC.}
Assuming without loss of generality that $\left(\ell^0_{i,t}\right)_t$ is increasingly sorted, the water-filling solution of this problem depends on the aggregated charging need $L_i$:
\begin{equation}
    G^*_i(L_i) = \frac{\left(L_i + L^0_{i,t_0}\right)^2}{t_0(L_i)} + \sum_{t=t_0+1}^T \left(\ell^0_{i,t}\right)^2\,,
    \label{eq:val}
\end{equation}
where $L^0_{i,t} = \sum_{s\leq t}\ell^0_{i,s}$ and $t_0(L_i)\geq 1$ is such that $L_i\in ]\Delta_{t_0};\Delta_{t_0+1}]$, with $\Delta_t = t\times\ell^0_{i,t}-L^0_{i,t}$ \tcb{for $t\leq T$ and $\Delta_{T+1} = +\infty$}.
The corresponding optimal aggregated charging profile is $\ell_{i,t}^*=0$ for $t>t_0$, and for $t\leq t_0$:

\begin{figure*}[ht]
\centering
\includegraphics[width = .95\textwidth]{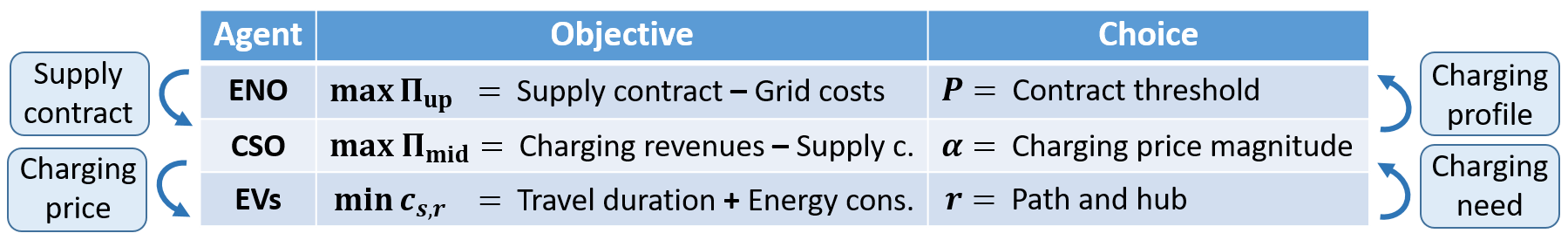}
\caption{Diagram of the different agents, their decision variables and their interactions.}
\label{fig:interactions}
\end{figure*}

\begin{equation}
    \ell_{i,t}^*(L_i) = 
    \frac{L_i+L^0_{i,t_0}}{t_0(L_i)} - \ell^0_{i,t}\,.
    \label{eq:water_filling}
\end{equation}

Then, the CSO sets the charging unit price $\lambda_i$ at hub $i$ as a function of $G^*_i$.
In this work we use the Locational Marginal Pricing (or LMP), in which $\lambda_i$ is the derivative of $G^*_i$, which is proven to be the most efficient way to incite users to reduce $G^*_i$~\cite{li2013distribution}.
Our model can be adapted to other pricing mechanisms, like the average CSO's cost~\cite{sohet2020coupled}.
More precisely, $\lambda_i$ is set to be proportional to the LMP as follows:
\begin{equation}
    \lambda_i(\alpha\,,L_i) = \alpha \times\frac{\mathrm{d}G_i^*}{\mathrm{d}L_i} = 2 \alpha \frac{L_i + L^0_{i,t_0}}{t_0(L_i)}\,,
\label{eq:LMP}
\end{equation}
with $\alpha$ the variable with which the CSO optimizes its payoff.
This variable is the same for all CSO's hubs $i$ and can be seen as a conversion parameter from 
marginal energy costs $\mathrm{d} G_i^*/\mathrm{d}L_i$ (kW$^2$/kWh) of all of its hubs $i$ into reasonable monetary prices $\lambda_i$ (\euro/kWh) in order to maximize its payoff.
As $\lambda_i$ is a function of $L_i$, the charging unit price at CSO's hub $i$ is a congestion cost like travel duration, i.e. depends on the number of EVs charging at hub $i$.
Note that the CSO does not change the structure of the charging unit prices at its hubs (as locational marginal prices), but only their order of magnitude.

As $G_i^*$ is a nondecreasing function of $L_i$ (see~\eqref{eq:val}), variable $\alpha$ must be nonnegative in order to have $\lambda_i\geq 0$.
Moreover, it is assumed that some regulator sets an upper-bound $\overline{\alpha}$ to the CSO's decision variable.
The feasible set of the CSO's strategy is denoted $\Y = \{\alpha\in\mathbb{R} ~|~ 0\leq \alpha \leq \overline{\alpha}\}$.
The CSO wants to optimize its net payoff, the difference between its revenues and its costs.
 Its revenues are what EVs pay to be charged at CSO's hubs, and its costs come from the electricity supplying contracts with the ENO, which are described in next section.


\subsection{Electrical Network Operator: designs CSO supply contract}

\tcg{
In this urban framework, only the medium-voltage distribution grid and its operator the ENO are considered, and not the possible interactions with low-voltage distribution and the transmission grids.
This ENO specifies the electricity supplying contract with the CSO to engage grid costs reductions.
The CSO has to pay the ENO the supplying costs $C_{i,t}$ for the energy used to charge EVs at CSO's hub $i$ and time slot $t$.
The ENO determines one of the parameters of the contracts, a power threshold $P$, which is the same for all hubs and time slots.
Whether the total load at given hub and time slot is above or below this threshold $P$, the CSO's electricity bill varies.
}


The total load at CSO's hub $i$ and time slot $t$ is made of the optimal aggregated charging profile given by the water-filling algorithm and the nonflexible part, and is equal to $\ell^{\text{tot}}_{i,t} = \ell_{i,t}^* + \ell^0_{i,t}$.
If the total load $\ell^{\text{tot}}_{i,t}$ is below the power threshold $P$, the price per energy unit is $\mu(P)$, otherwise, the unit price of the exceeding load is $\overline{\mu}(P)>\mu(P)$.
Functions $\mu$ and $\overline{\mu}$ are increasing: the higher the power threshold $P$ prescribed to the CSO, the higher the price per energy unit.
To simplify, linear functions are used: $\mu(P)=q P$ and $\overline{\mu}(P) = \overline{q} P$ with $\overline{q}>q$.
The total supplying costs for both the charging and the nonflexible consumption at hub $i$ and time slot $t$ are given by the following function $C$, and the supplying costs $C_{i,t}$ only due to charging are defined on a pro rata basis:
\begin{equation}
    C_{i,t}\left(L_i\,,P\right) = \frac{\ell^*_{i,t}(L_i)}{\ell^{\text{tot}}_{i,t}(L_i)}\times C\big(\ell^{\text{tot}}_{i,t}(L_i),P\big)\,, \quad \text{where}
\label{eq:Cit}
\end{equation}
\begin{equation*}
C\left(\ell^{\text{tot}}_{i,t},P\right) = \mu(P)\min\left(\ell^{\text{tot}}_{i,t}\,, P\right)
+ \overline{\mu}(P) \max\left(0\,, \ell^{\text{tot}}_{i,t}-P\right).
\end{equation*}
Note that even if threshold $P$ is the same for all CSO's hubs, the supplying cost functions $C_{i,t}$ for EV charging are different due to the different nonflexible loads $\ell_{i,t}^0$ at each hub $i$. 

\tcg{
For each time slot $t$, the ENO's cost $\mathcal{G}_t$ is defined as the marginal grid costs associated with EV charging. 
The grid costs are modeled as a quadratic proxy of the apparent power at the head of the city's grid.
If $S_t$ is the apparent power required to meet the total energy demand $\left(\ell^{\text{tot}}_{i,t}\right)_i$ during time slot $t$, and $S_t^0$ the one corresponding to the nonflexible demand $\left(\ell^0_{i,t}\right)_i$ only, then the ENO's cost can be expressed as $\mathcal{G}_t= (S_t)^2 - (S_t^0)^2$.
Note that at hubs $j\notin\mathcal{H}_{\text{cso}}$ which do not belong to the CSO, EVs are supposed to plug and charge: $\ell_{j,1}^* = L_j$ and $\ell_{j,t}^*=0$ if $t > 1$.
The apparent power is obtained by solving the power flow equations from the Bus Injection Model~\cite{zhu2015optimization}, which correspond to the power balance at each bus (between the given power production/load $S_{0,k}$ at bus $k$ and power flows $S_k$ from/to the bus):
\begin{equation}
S_{0,k} = U_k \sum_{m\in X_k} \overline{Y_{k,m}}\overline{U_m} ~(= S_k)\,,
\label{eq:pf}
\end{equation}
with $U_k$ the complex voltage at bus $k$, $X_k$ the set of buses connected to bus $k$ and $Y_{k,m}$ the admittance of the line between buses $k$ and $m$.
}

The ENO's objective can then be expressed as:
\begin{equation}
\up\left(P\,,\bm{L}\right) = \sum_{t=1}^T\Big(\sum_{i\in\mathcal{H}_{\text{cso}}} C_{i,t}\left(L_i\,, P\right)
- \beta\times\mathcal{G}_t\left(\bm{\ell}_t^*\right)\Big)
,
\label{eq:obj_ENO}
\end{equation}
with $\bm{L} = \left(L_i\right)_i$ and $\beta$ a parameter which transforms $\mathcal{G}_t$ into a monetary cost.
The ENO's decision variable, the power threshold $P\geq 0$, is supposed to be bounded by $\overline{P}$ by some regulator.
The feasible set of the ENO's strategy is denoted $\X = \{P\in\mathbb{R} ~|~ 0\leq P \leq \overline{P}\}$.
Note that the ENO's objective depends on $\bm{L}$, the result of drivers' strategies, which depends itself on ENO's decision variable $P$, as shown in the next section.
The different agents, their decision variables and their interactions are summarized in Fig.~\ref{fig:interactions}.

\section{The trilevel optimization problem}
\label{sec:theory}
\tcb{
Last section introduced the three types of agents in our smart charging framework and their interactions.
This section focuses on the outcome of such a system.
The following multilevel optimization problem is solved by the ENO as the decision maker at the upper level of the decision process. In particular, the ENO aims to maximize its objective function denoted $\up$.
Note that the electricity supplying contract between the ENO and CSO, and the charging unit prices at CSO's hubs are long-term strategies (resp. of the ENO and CSO).
They are assumed to be based on the forecast of drivers' behavior on a specific working day, forecast which is the Wardrop Equilibrium (WE) vehicles naturally reach and which depends on the charging unit prices (see next section).
For example, the ENO might be pessimistic and optimizes its net payoff on a worst-case-scenario day (e.g., with a high proportion of EVs on the roads).
}

\tcb{
The information available for each agent is as follows.
The drivers know their costs functions on this specific working day: they observe the charging unit price functions chosen by the CSO.
Therefore they can choose the optimal path and place to charge during this working day, corresponding to the WE of this day.
\tcbl{The CSO has access to the behavior model of vehicle users and knows the main characteristics of the problem, such as the transportation network properties, the travel demands between origins $O$ and destinations $D$, etc.}
Therefore, the CSO can compute the WE for any charging unit prices it chooses.
However, the CSO has no information on the grid topology and consequently on ENO's costs, so that it does not know how the ENO chooses the supplying contract.
Thus, the CSO must observe its supplying contract only once it is chosen by the ENO.}
\tcbl{Finally, the ENO has also access to the behavior model of vehicle users and to general information (e.g., travel demands), including the structure of the charging unit prices, which is assumed to be publicly disclosed by the CSO.}
This way, the ENO can compute the WE, the CSO's revenues and then CSO's reaction to its supplying costs (chosen by the ENO).
This constitutes a trilevel optimization problem as illustrated on Fig.~\ref{fig:interactions}, with the ENO at the upper level, the CSO at the middle one and the drivers at the lower level.


\subsection{Vehicle users at Wardrop Equilibrium}
Before defining the trilevel optimization problem, some details about the lower level are needed.
On the working day considered, the city's commuters have to choose how to get to their workplace and whether they charge their vehicle during the working hours. 
Due to the congestion effects on the road and also on the charging unit prices at CSO's hubs, the decision of a driver depends on the others'. The solution concept used to study this interaction is the Wardrop Equilibrium (see Definition \ref{WE}). Such equilibria can be computed via Beckmann function~\cite{BECKMANN56}: 

\begin{prop}\label{prop:Beckman}
For any CSO's strategy $\alpha$, the local minima~of the following constrained optimization problem are WE of $\mathbb{G}$:
\begin{equation}
    \min_{\bm{x}\in X} \mathcal{B}(\bm{x},\alpha)\,, ~ \text{ with}
    \label{eq:beckmann}
\end{equation}

 \begin{equation*}
 \begin{aligned}
 &\mathcal{B}(\bm{x}, \alpha) = 
 \sum_a\hspace{-1mm}\int_0^{x_a}\hspace{-2mm} d_a
 + \hspace{-2mm}\sum_{(s,r)\in \mathcal{S}}\hspace{-1mm}x_{s,r} \left(t_{i_r} + \ell_{s,r}\lambda_{s,r}\right)
 +\alpha\hspace{-2mm}\sum_{i\in\mathcal{H}_{\text{cso}}}\hspace{-2mm} \mathcal{G}^*_i(\bm{x}_e)\\
  &\quad X \hspace{4.3mm} = \Big\{(x_{s,r})_{s,r} ~|~  x_{s,r}\geq 0\,, ~\sum_{r\in OD} x_{s,r} = X_s^{OD}\Big\}\,,
 \end{aligned}
 \end{equation*}
 with $x_a = \sum_{\{r \text{ s.t. } a\in r\}}\sum_s x_{s,r}$ the total vehicle flow on arc $a$, $\mathcal{S}=\{(e_j,r_S) ~s.t.~ i_r\notin\hub,~ (g,r),~ (e_1, r_H)\}$ and $X_s^{OD}$ the portion of class $s$ vehicles with origin $O$ and destination $D$.
 \label{prop:beckmann}
\end{prop}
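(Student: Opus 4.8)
The plan is to prove this by the classical Beckmann transformation. First I would show that, for fixed $\alpha$, the potential $\mathcal{B}(\cdot,\alpha)$ is continuously differentiable on $X$ with
\[
\frac{\partial \mathcal{B}}{\partial x_{s,r}}(\bm{x},\alpha)=c_{s,r}(\bm{x})
\]
for every class--path pair $(s,r)$, the right-hand side being the cost \eqref{eq:costs}; then I would read the first-order optimality (KKT) conditions of \eqref{eq:beckmann} at a local minimum as exactly the Wardrop inequalities of Definition~\ref{WE}. Throughout I write $\mathcal{G}^*_i(\bm{x}_e)$ for $G^*_i(L_i(\bm{x}_e))$, with $G^*_i$ from \eqref{eq:val}.

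\emph{Step 1: the gradient of the potential.} I would differentiate $\mathcal{B}$ summand by summand. For the arc term, $x_a=\sum_{\{r':a\in r'\}}\sum_{s'}x_{s',r'}$ is affine with $\partial x_a/\partial x_{s,r}$ equal to $1$ if $a\in r$ and $0$ otherwise, so $\partial_{x_{s,r}}\sum_a\int_0^{x_a}d_a=\sum_{a\in r}d_a(x_a)$. The middle (affine) term contributes $t_{i_r}+\ell_{s,r}\lambda_{s,r}$ for the pairs $(s,r)\in\mathcal{S}$ that carry a constant price ($\lambda_g$, $\lambda^0_H$ or $\lambda^0_S$) and nothing for the remaining pairs $(e_j,r_S)$ with $i_r\in\hub$. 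For the last term, since $L_i(\bm{x}_e)=\sum_r\delta_{i_r,i}\sum_j x_{e_j,r_S}\,\ell_{e_j,r}$ is affine in $\bm{x}_e$, the chain rule gives, for a pair $(e_j,r_S)$ with $i_r\in\hub$,
\[
\frac{\partial}{\partial x_{e_j,r_S}}\big(\alpha\,\mathcal{G}^*_{i_r}(\bm{x}_e)\big)=\alpha\,\frac{\dd G^*_{i_r}}{\dd L_{i_r}}\,\ell_{e_j,r}=\lambda_{i_r}(\alpha,L_{i_r})\,\ell_{e_j,r},
\]
by the very definition \eqref{eq:LMP} of the LMP-based price, and $0$ for every other pair. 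Adding the three contributions reproduces $c_{s,r}(\bm{x})$ in each case. One regularity point must be checked here: that $G^*_i$ is $C^1$ in $L_i$. From \eqref{eq:val} it is piecewise quadratic with breakpoints $\{\Delta_t\}$, and a one-line computation shows that the one-sided derivatives $2(L_i+L^0_{i,t_0})/t_0(L_i)$ agree at each $\Delta_t$ (the usual continuity of the water-filling value function), so $\mathcal{B}(\cdot,\alpha)\in C^1(X)$.

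\emph{Step 2: from optimality to Wardrop.} The set $X$ is a polyhedron --- nonnegativity constraints together with the affine demand balances $\sum_{r\in OD}x_{s,r}=X_s^{OD}$ for each $(s,OD)$ --- so, all constraints being affine, the KKT conditions are necessary at any local minimum $\bm{x}^*$: there are multipliers $\nu^{OD}_s$ for the equalities and $\eta_{s,r}\ge0$ for the nonnegativity constraints with $\eta_{s,r}x^*_{s,r}=0$ and $\partial_{x_{s,r}}\mathcal{B}(\bm{x}^*,\alpha)=\nu^{OD(r)}_s+\eta_{s,r}$. By Step~1 this reads $c_{s,r}(\bm{x}^*)=\nu^{OD(r)}_s+\eta_{s,r}\ge\nu^{OD(r)}_s$, with equality whenever $x^*_{s,r}>0$. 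Hence, for any two paths $r,r'$ of the same origin-destination pair with $x^*_{s,r}>0$, one gets $c_{s,r}(\bm{x}^*)=\nu^{OD}_s\le c_{s,r'}(\bm{x}^*)$, which is precisely the condition of Definition~\ref{WE}, so $\bm{x}^*$ is a WE of $\mathbb{G}$.

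I expect the only genuinely non-routine point to be the integrability of the \emph{nonseparable} charging cost: establishing that $\bm{x}_e\mapsto\alpha\sum_{i\in\hub}\mathcal{G}^*_i(\bm{x}_e)$ is a bona fide potential whose partials reproduce the prices $\lambda_{i_r}\,\ell_{e_j,r}$. This works exactly because of the LMP form $\lambda_i=\alpha\,\dd G^*_i/\dd L_i$ and because $L_i$ is affine in the flows; a pricing rule whose Jacobian in $\bm{x}_e$ were not symmetric would admit no such potential and the whole approach would collapse. The $C^1$ check of Step~1 is a secondary technicality. I would also remark in passing that, since each $d_a$ is nondecreasing and each $G^*_i$ convex with $\alpha\ge0$, the function $\mathcal{B}(\cdot,\alpha)$ is convex, so its local minima are global and the stated one-way correspondence is in fact an equivalence --- although this is not needed for the statement.
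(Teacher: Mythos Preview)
Your proof is correct and follows exactly the approach the paper sketches: write the KKT conditions of~\eqref{eq:beckmann} at a local minimum and use the fact that $\lambda_i=\alpha\,\dd G^*_i/\dd L_i$ (equation~\eqref{eq:LMP}) to identify the stationarity conditions with the Wardrop inequalities of Definition~\ref{defi:we}. The paper's own proof is a two-line sketch that defers details to~\cite{sohet2020coupled}; your Steps~1--2 simply fill those details in, and your $C^1$ check on $G^*_i$ is essentially the continuity part of the paper's Lemma~\ref{lem:increasing}.
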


Unfortunately, for some CSO's strategies $\alpha\in\Y$, there might be several minima of~\eqref{eq:beckmann} and therefore, several WE.
However, the following proposition shows (proof in Appendix~\ref{appendix:unique}) that even if there are several WE, they all lead to the same congestion $d_a^*(\alpha)$ on each road $a$ and the same total charging need $L_i^*(\alpha)$ at CSO's hub $i$.
Therefore, for given strategies $\alpha$ and $P$, the CSO and the ENO can expect a unique drivers' impact on their metrics, respectively on CSO's hubs and on the electrical grid.

\begin{prop}
\label{prop:WE_same_costs}
Let the CSO's strategy be any $\alpha\in\Y$.
Any different WE $\bm{x},\bm{y}$ of game $\mathbb{G}$ verify:
\begin{equation}
    \forall a\,,~ x_a = y_a \,,\qquad \forall i\in\hub\,,~ L_i(\bm{x})= L_i(\bm{y})\,.
    \label{eq:property}
\end{equation}
\end{prop}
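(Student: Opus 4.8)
The plan is to leverage the potential structure of $\mathbb{G}$ from Proposition~\ref{prop:beckmann}: since the Beckmann function $\mathcal{B}(\cdot,\alpha)$ is convex and its gradient is the cost operator $\bm{c}=(c_{s,r})$ of $\mathbb{G}$, two Wardrop Equilibria will be forced, by a strict-monotonicity argument, to induce the same arc flows and the same aggregated charging needs.

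\emph{Step 1 (potential structure).} I would first record that $\mathcal{B}(\cdot,\alpha)$ is convex and $C^1$ on the polytope $X$: $\int_0^{x_a}d_a$ is convex in the arc flow $x_a$ because the BPR function $d_a$ is nondecreasing, the middle sum is linear in $\bm{x}$, and $\alpha\,\mathcal{G}^*_i(\bm{x}_e)=\alpha\,G^*_i(L_i(\bm{x}_e))$ is a nonnegative multiple ($\alpha\ge0$) of the convex function $G^*_i$ precomposed with the linear map $\bm{x}_e\mapsto L_i(\bm{x}_e)$; the convexity and $C^1$ regularity of the water-filling value function $G^*_i$ follow from it being the optimal value of the parametric convex program~\eqref{eq:local} in the right-hand side $L_i$ of its equality constraint, and are visible on~\eqref{eq:val}. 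Moreover $\nabla_{\bm{x}}\mathcal{B}(\cdot,\alpha)=\bm{c}$: the only nonseparable term, $\alpha\sum_{i\in\hub}G^*_i(L_i(\bm{x}_e))$, produces, for a path $r$ with $i_r\in\hub$, $\partial/\partial x_{e_j,r_S}=\alpha\,\ell_{e_j,r}\,\mathrm{d}G^*_{i_r}/\mathrm{d}L_{i_r}=\ell_{e_j,r}\,\lambda_{i_r}$ by~\eqref{eq:LMP} --- the very reason the LMP rule makes $\mathbb{G}$ a potential game. Finally, by the standard variational characterization of Wardrop Equilibria (Definition~\ref{WE}), any WE $\bm{x}^*$ satisfies $\langle\bm{c}(\bm{x}^*),\bm{x}-\bm{x}^*\rangle\ge0$ for all $\bm{x}\in X$.

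\emph{Step 2 (strict monotonicity).} Let $\bm{x},\bm{y}$ be two WE. Writing the variational inequality of Step~1 at each of them and adding the two gives $\langle\bm{c}(\bm{x})-\bm{c}(\bm{y}),\bm{x}-\bm{y}\rangle\le0$, while convexity of $\mathcal{B}$ (equivalently, monotonicity of $\bm{c}=\nabla\mathcal{B}$) gives the reverse inequality; hence $\langle\bm{c}(\bm{x})-\bm{c}(\bm{y}),\bm{x}-\bm{y}\rangle=0$. Using $\bm{c}=\nabla\mathcal{B}$ together with the fact that $x_a$ is affine in $\bm{x}$ and $L_i$ is affine in $\bm{x}_e$, this inner product rearranges into the vanishing sum of nonnegative terms $\sum_a(d_a(x_a)-d_a(y_a))(x_a-y_a)+\alpha\sum_{i\in\hub}(\psi_i(L_i(\bm{x}_e))-\psi_i(L_i(\bm{y}_e)))(L_i(\bm{x}_e)-L_i(\bm{y}_e))=0$, where $\psi_i:=\mathrm{d}G^*_i/\mathrm{d}L_i$ (nonnegativity because $d_a$ and $\psi_i$ are nondecreasing and $\alpha\ge0$), so each term must vanish. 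It then suffices that $d_a$ and $\psi_i$ be \emph{strictly} increasing: $d_a$ is strictly increasing on $[0,\infty)$ since its derivative is proportional to $x_a^3$ and vanishes only at $0$; and $\psi_i=2(L_i+L^0_{i,t_0})/t_0(L_i)$ is strictly increasing on each interval $]\Delta_{t_0};\Delta_{t_0+1}]$ and continuous across each breakpoint $\Delta_{t_0+1}$ (where both one-sided values equal $2\ell^0_{i,t_0+1}$, thanks to the increasing sorting of $(\ell^0_{i,t})_t$), hence strictly increasing on $[0,\infty)$. Therefore $x_a=y_a$ for every arc $a$ and $L_i(\bm{x}_e)=L_i(\bm{y}_e)$ for every $i\in\hub$, which is~\eqref{eq:property}.

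\emph{Variant and main obstacle.} Alternatively one can argue via the midpoint $\bm{z}=\tfrac12(\bm{x}+\bm{y})\in X$: both WE minimize the convex $\mathcal{B}(\cdot,\alpha)$, so Jensen's inequality is tight at $\bm{z}$, and tightness for the strictly convex summands $\int_0^{x_a}d_a$ and $G^*_i$ (the affine parts being automatically tight) forces the same conclusion. In either route the one genuinely delicate point is the strict convexity of the water-filling value function $G^*_i$ around its breakpoints $\Delta_{t_0}$ --- i.e. checking that~\eqref{eq:val} glues into a $C^1$ function with a strictly increasing derivative --- which is exactly where the monotone sorting assumption on $(\ell^0_{i,t})_t$ is used; everything else is routine potential-game and convex-analysis bookkeeping.
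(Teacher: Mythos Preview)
Your proposal is correct and follows essentially the same route as the paper's proof: both use the variational-inequality characterization of Wardrop equilibria together with the (strict) monotonicity of $d_a$ and of $\lambda_i=\alpha\,\mathrm{d}G^*_i/\mathrm{d}L_i$ to force $\langle\bm{c}(\bm{x})-\bm{c}(\bm{y}),\bm{x}-\bm{y}\rangle=0$ and then read off~\eqref{eq:property} term by term. Your write-up is more detailed---you explicitly invoke the Beckmann potential and check the $C^1$ gluing of $G^*_i$ at the water-filling breakpoints, which is precisely the content of the paper's Lemma~1---but the argument is the same.
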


Note that total charging needs at WE depend on $\alpha$, CSO's decision variable (see the expression of $\mathcal{B}(\bm{x},\alpha)$ in~\eqref{eq:beckmann}).
According to Prop.~\ref{prop:beckmann} and~\ref{prop:WE_same_costs}, any solution of optimization problem~\eqref{eq:beckmann} gives the unique $\bm{L}^*(\alpha)=\left(L_i^*(\alpha)\right)_{i\in\hub}$ at WE.


\subsection{The trilevel problem formulation}

As mentioned in Sec.~\ref{sec:CSO}, the objective $\mi$ of the CSO is the difference between its charging revenues and its electricity supplying costs.
At each CSO's hub $i$, the revenue $R_i$ is the product between the charging unit price $\lambda_i$ and the total charging need $L_i$ at this hub.
The CSO knows, for each $\alpha\geq 0$, that this need is the unique $L_i^*(\alpha)$ when drivers are at equilibrium, so that the revenue from hub $i$ can be written:
\begin{equation}
    R_i(\alpha\,, L_i^*(\alpha)) =   L_i^*(\alpha) \times \lambda_i\big(\alpha\,, L_i^*(\alpha)\big)\,.
\end{equation}
In function of ENO's strategy $P$, the CSO has to minimize over $\alpha\in\Y$ the following objective:
\begin{equation}
    \mi\left(\alpha,P,\bm{L}^*(\alpha)\right) =\hspace{-2mm} \sum_{i\in\mathcal{H}_{\text{cso}}}\hspace{-2mm}\bigg(\hspace{-1mm}R_i\left(\alpha\right)-\sum_{t=1}^T C_{i,t}\big(L_i^*(\alpha),P\big)\hspace{-1mm}\bigg)
    \label{eq:mid}
\end{equation}

For each CSO's strategy $\alpha\in\Y$, the ENO knows the global charging need $\bm{L}^*(\alpha)$ at WE.
However, as the objective function $\mi$ is not convex, $\mi$ might have several global optima $\alpha^*$.

\tcg{In this work, it is supposed that there is a minimal cooperation between the CSO and the ENO, which leads to an optimistic formulation of the multilevel problem.
This optimistic assumption states that for any ENO's strategy $P$, the global optimum $\alpha^*$ of~\eqref{eq:mid} which gives the highest ENO's objective $\up(P,\bm{L}^*(\alpha^*))$ is chosen.
}
Finally, the global trilevel optimization problem to solve is:


\begin{subequations}
\begin{align}
&\max_{P\in\X, \alpha^*\in\Y}~ \up\Big(P\,, \bm{L}^*(\alpha^*)\Big)\,,\label{eq:min_3}\\
&\qquad\text{s.t.} \quad  \mi\left(\alpha^*\,, P\,,\bm{L}^*(\alpha^*)\right) = \MI\left(P\right)\,,\label{eq:min_2}\\ 
& \qquad\quad\quad \text{s.t.} \quad  \bm{L}^*(\alpha^*) = \bm{L}\left( \argmin_{\bm{x}\in X}~\mathcal{B}\left(\bm{x}\,, \alpha^*\right)\right)\,,\label{eq:min_1}
\end{align}
\label{eq:trilevel}
\end{subequations}
\noindent
where $\MI \left(\x\right) = \max_{\y\in\Y}\mi\left(\y\,,\x\,,\bm{L}^*(\alpha)\right)$ and function $\argmin$ returns the set of global minima of $\mathcal{B}$, which share the same $\bm{L}^*$ (see Prop.~\ref{prop:WE_same_costs}).

This trilevel problem can be seen as a Stackelberg game (between the upper and middle levels) with equilibrium constraints (lower level)~\cite{ehrenmann2004equilibrium}.
Note that depending on the information available to the ENO and CSO, other trilevel frameworks can be considered: if both the CSO and the ENO know the reactions of the other, they play in a simultaneous Nash game, with equilibrium constraints (lower level).
However, solving this Nash game with algorithms such as Best Response may not converge due to the equilibrium constraints.

\subsection{\tcb{Iterative method based on literature review}}
\label{sec:alternative}

\tcb{
In this section the most commonly used model of EV charging incentives in coupled electrical-transportation systems~\cite{alizadeh2016optimal,wei2017network} is introduced briefly.
In this reference model, the EV lower level is the same as the one in the new trilevel model introduced in this paper.
However, the different operators of the electrical system (CSO and ENO) are gathered into a unique System Operator (SO), which chooses the charging unit prices at its hubs which directly minimize the grid costs $\mathcal{G}=\beta\sum_t\mathcal{G}_t$ (instead of maximizing CSO's payoff).
To this end, the SO uses the following LMP function in order to determine the charging price for each hub $i$:
\begin{equation}
    \lambda_i(\bm{L}) = \tilde{\alpha} \times\frac{\mathrm{d}\mathcal{G}\left(\bm{\ell}\right)}{\mathrm{d}L_i}\,,
    \label{eq:LMP_true}
\end{equation}
which is the derivative of grid costs $\mathcal{G}$ obtained with power flow computations~\eqref{eq:pf} instead of the local quadratic proxy~\eqref{eq:local}.
Parameter $\tilde{\alpha}$ converts marginal grid costs into reasonable charging prices like CSO's decision variable $\alpha$.
However, no method to fix $\tilde{\alpha}$ is available in the literature.
}

\tcb{
Note that papers using this method have no smart charging algorithm, so here the whole EV battery need is assumed to be charged during the first time slot: $\bm{\ell}_1 = \bm{L}$ (method referred to as LMP+P\&C in numerical Sec.~\ref{sec:num}, for Plug and Charge).
However, it is possible to consider an improved method (referred to as LMP+SC, for ``Smart Charging'') by solving the following charging scheduling problem:
\begin{equation}
    \mathcal{G}^*= \min_{\left(\ell_{i,t}\right)} \sum_{t=1}^T \beta\mathcal{G}_t\left(\bm{\ell}_t\right)\
    ~~ \text{s.t.} ~\forall i\,,~\sum_{t=1}^T \ell_{i,t} = L_i\,.
    \label{eq:grid_aware_wf}
\end{equation}
Both alternative methods (LMP+P\&C and LMP+SC) follow an iterative process: they alternatively compute the aggregated charging needs $\bm{L}^{(0)}$ at WE corresponding to charging unit prices $\bm{\lambda}^{(0)}$, then compute $\bm{\lambda}^{(1)}$ using~\eqref{eq:LMP_true} then update the charging needs $\bm{L}^{(1)}$ and so on.
Note that there is no proof of convergence of this iterative process in the literature.
}


\section{Resolution of trilevel optimization problem}
\label{sec:algorithm}
\subsection{\tcb{An iterative method for upper and middle levels optimization}}
\tcb{
In most multilevel optimization problems, the convex lower level is replaced by the corresponding Karush-Kuhn-Tucker (KKT) conditions~\cite{colson2007overview}, which would transform the trilevel problem~\eqref{eq:trilevel} into a bilevel (upper-middle) one with equilibrium constraints.
However, using KKT conditions introduces integer variables and therefore transforms the global optimization problem into a mixed-integer nonlinear optimization problem, which increases dramatically the computational complexity~\cite{Papadimitriou98}.
In our setting we found that it was much faster to rather keep the initial trilevel structure~\eqref{eq:trilevel} and simply solve the convex lower level using sequential least squares programming~\cite{boggs1995sequential}.
Thus, for the resolution of the global problem, we focus on the upper (ENO) and middle (CSO) levels.
The lower level is referred to as an implicit numerical function $\bm{L}^*(\alpha)$ of CSO's price strategy $\alpha$ (see Prop.~\ref{prop:WE_same_costs}), which is the global charging need when vehicle users are at equilibrium.
The global trilevel optimization problem is rewritten as:
\begin{equation}
\begin{aligned}
&\max_{\x \in \X\,, \y^*\in\Y}~ \up \left(\x\,,\bm{L}^*(\alpha^*)\right)\,,\\
&\qquad\text{s.t.} \quad \mi\left(\y^*\,, \x\,,\bm{L}^*(\alpha^*)\right) \geq \MI \left(\x\right) - \epsmi\,,
\end{aligned}
\label{eq:bilevel}
\end{equation}
with $\epsmi>0$ a tolerance level introduced to guarantee the convergence of the algorithm suggested to solve~\eqref{eq:bilevel}. 
Note that a pessimistic version of Algorithm~\ref{algo:Mitsos} introduced below can be used instead of the optimistic formulation~\eqref{eq:bilevel}.}
\tcb{To ease notations, $\mi\left(\y^*, \x,\bm{L}^*(\alpha^*)\right)$ is written $\mi\left(\y^*, \x\right)$, but note that both the computation of $\mi$ and $\up$ requires $\bm{L}^*$, i.e. to solve the convex lower level optimization.}



The global trilevel problem~\eqref{eq:bilevel} is solved using Algorithm~\ref{algo:Mitsos}, which is a simplified version of the iterative bounding algorithm introduced in~\cite{mitsos2008global}, as there are no constraints at the upper and middle levels other than variable bounds.
In Algorithm~\ref{algo:Mitsos}, the global optimization problems~\relaxed~and~\alphak~at each iteration are solved by algorithms detailed in next section, but any other suitable algorithms can be applied.
By definition of $\overline{\alpha}_k$, if the solution of~\relaxed~at an iteration of Algorithm~\ref{algo:Mitsos} verifies the stopping criteria, then it is a solution of the initial trilevel problem~\eqref{eq:bilevel}. 
The convergence of Algorithm~\ref{algo:Mitsos} is guaranteed by the following proposition (proved in Appendix~\ref{appendix:mistros}).
\begin{algorithm}[t]
\KwIn{$\x_0\,,\y_0\,, k=0$}
\textbf{Notation:} $\mi(\alpha\,,P) \leftarrow \mi\big(\alpha\,,P\,, \bm{L}^*(\alpha)\big)$\\
$\overline{\y}_0 = \argmax_{\alpha}\mi(\alpha\,,P_0)$\\
    \While{$\mi(\y_k\,,P_k) < \mi(\overline{\y}_k\,,P_k)-\epsmi$}
    {
    $k \leftarrow k+1$\\
    (i) $\left(\x_k\,,\y_k\right)=\argmax_{P,\alpha}\up(P,\bm{L}^*(\alpha))$\hspace{5mm} \relaxed\\
    ~~~s.t. $\forall l<k$, $\mi\left(\y, \x\right) \geq \mi \left(\overline{\y}_l,\x\right) - \epsmi/3$\\
    ~~~solved with simulated annealing (\textbf{Algorithm~\ref{algo:annealing}})\\
    (ii) $\overline{\y}_k= \argmax_{\alpha}\mi(\alpha\,,P_k)$ \hspace{18mm} \alphak\\
    \hspace{4.5mm} solved with Brent's method~\cite{4517235}\\
    }
\KwOut{$\x_k\,,\y_k$}
\caption{Iterative global algorithm}
\label{algo:Mitsos}
\end{algorithm}
\begin{algorithm}[t]
\KwIn{$N_r\,,(\overline{\y}_l)_l\,,\eta\,, k = 0$}
    \While{k $<N_r$}
    {
    $k\leftarrow k+1$ \\
    $\x$ uniformly chosen,
    $\y_{\x} = \argmax_{\overline{\y}_l}\mi\left(\overline{\y}_l\,,\x\right)$\\
        \While{$(\x\,,\y)$ not feasible}
        {
        $\y$ randomly chosen from $\mathcal{N}(\y_{\x}\,,\eta)$
        }
    \textbf{Acceptation:} $k= 0$, with probability $\gamma_{\bm{z},n}\left(\x,\y\right)$
    } 
\KwOut{Accepted $(\x\,,\y)$ giving maximal $\up$}
\caption{Simulated annealing solving \relaxed}
\label{algo:annealing}
\end{algorithm}

\begin{prop}
Algorithm~\ref{algo:Mitsos} stops after a finite number of steps $K$ and delivers an output $(P_K, \alpha_K)$ solution of~\eqref{eq:bilevel}.
\label{prop:mistros}
\end{prop}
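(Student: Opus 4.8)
The plan is to specialize the convergence analysis of the iterative bounding algorithm of~\cite{mitsos2008global} to our setting, where the only constraints at the upper and middle levels are the box constraints $P\in\X$, $\alpha\in\Y$, and the lower level has been compressed into the implicit map $\alpha\mapsto\bm{L}^*(\alpha)$ (well defined by Prop.~\ref{prop:beckmann} and Prop.~\ref{prop:WE_same_costs}). Two things must be shown: \emph{(a)} the loop stops after finitely many iterations; \emph{(b)} the terminal iterate $(P_K,\alpha_K)$ solves~\eqref{eq:bilevel}. Part~\emph{(b)} is immediate from the construction: when the loop exits, the negation of its guard reads $\mi(\alpha_K,P_K)\geq\mi(\overline\alpha_K,P_K)-\epsmi$, and since $\overline\alpha_K=\argmax_{\alpha}\mi(\alpha,P_K)$ we have $\mi(\overline\alpha_K,P_K)=\MI(P_K)$; hence $(P_K,\alpha_K)$ satisfies the inner constraint of~\eqref{eq:bilevel}, i.e. it is feasible, and because at iteration $K$ problem~\relaxed~maximizes $\up(P,\bm{L}^*(\alpha))$ over the cut-relaxation of~\eqref{eq:bilevel}, with the tolerance split ($\epsmi/3$ in the cuts versus $\epsmi$ in the stopping test) inherited from~\cite{mitsos2008global} guaranteeing validity of the bound, the feasible $(P_K,\alpha_K)$ is also optimal.

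The substantive input is the continuity needed for part~\emph{(a)}. First, $\X=[0,\overline P]$ and $\Y=[0,\overline\alpha]$ are compact. Next I would establish that $\alpha\mapsto\bm{L}^*(\alpha)$ is continuous. The Beckmann objective $\mathcal{B}(\bm x,\alpha)$ of Prop.~\ref{prop:beckmann} is jointly continuous in $(\bm x,\alpha)$ (the congestion integrals and the linear price terms do not involve $\alpha$, and the CSO term is $\alpha\sum_{i\in\hub}\mathcal{G}^*_i(\bm x_e)$ with $\mathcal{G}^*_i$ a continuous function of the linear quantity $L_i(\bm x_e)$), while the feasible polytope $X$ is compact and independent of $\alpha$. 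Every global minimizer of $\mathcal{B}(\cdot,\alpha)$ is in particular a local minimizer, hence a WE by Prop.~\ref{prop:beckmann}, so by Prop.~\ref{prop:WE_same_costs} all global minimizers share the same arc flows and the same $\bm{L}^*(\alpha)$. A standard subsequence argument then yields continuity: if $\alpha_n\to\alpha$ and $\bm x^n\in\argmin\mathcal{B}(\cdot,\alpha_n)$, any cluster point $\hat{\bm x}$ of $(\bm x^n)$ in the compact set $X$ minimizes $\mathcal{B}(\cdot,\alpha)$ by joint continuity, hence $\bm{L}^*(\alpha_n)=\bm L(\bm x^n)$ has a subsequence converging to $\bm L(\hat{\bm x})=\bm{L}^*(\alpha)$, and as every subsequence has a further one with this same limit, $\bm{L}^*(\alpha_n)\to\bm{L}^*(\alpha)$. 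Consequently $\mi$ and $\up$ — compositions of $\bm{L}^*$ with the (continuous) water-filling value $G^*_i$ of~\eqref{eq:val}, the LMP~\eqref{eq:LMP}, the supply cost~\eqref{eq:Cit} and the power-flow map — are continuous on $\Y\times\X$, and $\MI(P)=\max_{\alpha\in\Y}\mi(\alpha,P)$ is continuous by Berge's maximum theorem.

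Finite termination then follows by contradiction. Suppose the loop runs forever, producing iterates $(P_k,\alpha_k,\overline\alpha_k)$ for all $k$. At iteration $k$ the guard holds, $\mi(\alpha_k,P_k)<\mi(\overline\alpha_k,P_k)-\epsmi=\MI(P_k)-\epsmi$, while feasibility of $(P_k,\alpha_k)$ in~\relaxed~gives $\mi(\alpha_k,P_k)\geq\mi(\overline\alpha_l,P_k)-\epsmi/3$ for every $l<k$; chaining these,
\begin{equation*}
\MI(P_k)-\mi(\overline\alpha_l,P_k)>\tfrac{2}{3}\epsmi\qquad\text{for all }l<k.
\end{equation*}
By compactness of $\X\times\Y$, extract a subsequence with $(P_{k_j},\overline\alpha_{k_j})\to(P_\infty,\overline\alpha_\infty)$. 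Applying the displayed bound with $k=k_{j'}$ and $l=k_j$ for $j<j'$ and letting $j,j'\to\infty$, continuity of $\mi$ and $\MI$ gives $\MI(P_\infty)-\mi(\overline\alpha_\infty,P_\infty)\geq\tfrac23\epsmi>0$; but $\mi(\overline\alpha_{k_j},P_{k_j})=\MI(P_{k_j})$ for every $j$ passes to the limit as $\mi(\overline\alpha_\infty,P_\infty)=\MI(P_\infty)$, a contradiction. Hence the loop terminates after some finite $K$, and part~\emph{(b)} applies to $(P_K,\alpha_K)$. Throughout, I assume — as stated after Algorithm~\ref{algo:Mitsos} — that the subproblems~\relaxed~and~\alphak~are solved to global optimality.

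\textbf{Main obstacle.} The delicate part is the continuity of the implicitly defined maps $\bm{L}^*$, $\mi$, $\up$, $\MI$: the lower level is a parametric optimization whose minimizer in $\bm x$ is only \emph{partially} unique, so one cannot just invoke a single-valued solution map — Prop.~\ref{prop:WE_same_costs} is exactly what rescues this, by making the quantities of interest ($x_a$, $L_i$) coincide across all minimizers. One must also check that the piecewise-defined objects feeding $\mi$ (the threshold index $t_0(L_i)$, the value $G^*_i$, its derivative used in~\eqref{eq:LMP}, and the kinked supply cost $C$ with its $\min$/$\max$) do not introduce jumps at their breakpoints. A secondary point requiring care is the tolerance bookkeeping of~\cite{mitsos2008global}: one verifies that the $\epsmi/3$ versus $\epsmi$ split simultaneously forces each newly added cut to strictly exclude the previous infeasible iterate — which is what produces the quantitative gap $\tfrac23\epsmi$ used in the contradiction — and keeps the relaxation~\relaxed~valid, so that the terminal iterate is feasible \emph{and} optimal for~\eqref{eq:bilevel}.
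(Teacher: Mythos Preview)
Your proposal is correct and follows essentially the same route as the paper: continuity of $\bm{L}^*$, $\mi$ and $\MI$ via Berge's maximum theorem together with Prop.~\ref{prop:WE_same_costs}, then compactness to extract a convergent subsequence and exploit the $\tfrac{2}{3}\epsmi$ gap to force finite termination. The paper packages this slightly differently---invoking Heine--Cantor for uniform continuity of $\mi$ and $\MI$ and directly exhibiting two nearby iterates $P_k,P_K$ at which the stopping criterion must hold, rather than arguing by contradiction---and leaves your part~(b) implicit in the relaxation structure inherited from~\cite{mitsos2008global}, but the substance is the same.
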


\subsection{CSO and ENO optimization problems: a simulated annealing approach}

Solving the optimization problems~\relaxed~and~\alphak~of Algorithm~\ref{algo:Mitsos} requires a global optimization method for nonconvex and nondifferentiable objectives with continuous constraints.
A natural candidate~\cite{dekkers1991global} is the simulated annealing method introduced in~\cite{romeijn1994simulated}.
The principle is to explore a sufficient number of random feasible couples $\left(\x\,,\y\right)$.
The stopping criterion chosen is based on the concept of acceptance, where a potential couple $\left(\x\,,\y\right)$ is accepted with probability:
\setcounter{equation}{22}
\begin{equation}
        \gamma_{\bm{z},n}\left(\x,\y\right) = \min\hspace{-1mm}\left(\hspace{-1mm}1,\exp\hspace{-1mm}\left(\frac{\up(\x,\bm{L}^*(\y))-\up(\bm{z})}{|\up(\bm{z})|\times K(n)}\right)\hspace{-1mm}\right)
\end{equation}
with $\bm{z}$ the last accepted couple and $K$ a function of the number of iterations $n$, here chosen as $K(n)=0.99^n$.
Note that a couple giving a lower $\up$ than the last accepted couple may be accepted, although it becomes less likely after many iterations (decreasing $K$).
Following~\cite{wah1999simulated}, the algorithm stops when no couple $\left(\x,\y\right)$ has been accepted $N_r$ iterations in a row.

Note that solving scalar optimization~\alphak~is much faster using scalar algorithms like Brent's method~\cite{4517235} rather than simulated annealing.
For problem~\relaxed, the difficulty with simulated annealing is to randomly find feasible couples $\left(\x,\y\right)$, i.e. which verify the constraints in~\relaxed: $\forall l<k,$ $\mi\left(\y, \x, \right) \geq \mi \left(\overline{\y}_l,\x\right) - \frac{\epsmi}{3}$.
However we observed that the optimum $\overline{\y}$ of $\mi(\cdot,P)$ depends faintly on $P$ because the variations of the electricity supplying costs $C_{i,t}(L_i^*(\alpha),P)$ due to $P$ are small.
Then for every $\x\in\X$, the $\y\in\Y$ such that $\left(\x,\y\right)$ is feasible are in the neighborhood of $\overline{\y}_{\x}$, with $\overline{\y}_{\x} = \argmax_{\overline{\y}_l}\mi\left(\overline{\y}_l,\x\right)$.
Consequently, we suggest that $\x\in\X$ should be uniformly chosen first and then $\y$, drawn from a normal distribution $\mathcal{N}(\overline{\y}_{\x},\eta)$ with mean $\overline{\y}_{\x}$ and standard deviation some parameter $\eta$ to choose.
The resulting simulated annealing method is described in Algorithm~\ref{algo:annealing}.

Our global multilevel problem is solved with Algorithm~\ref{algo:Mitsos}, which uses at each iteration global optimization Algorithm~\ref{algo:annealing}.
This numerical resolution is applied in next section to illustrate how our model can help ENO and CSO make decisions.
œ
\begin{figure}
    \centering
        \includegraphics[width=0.45\textwidth]{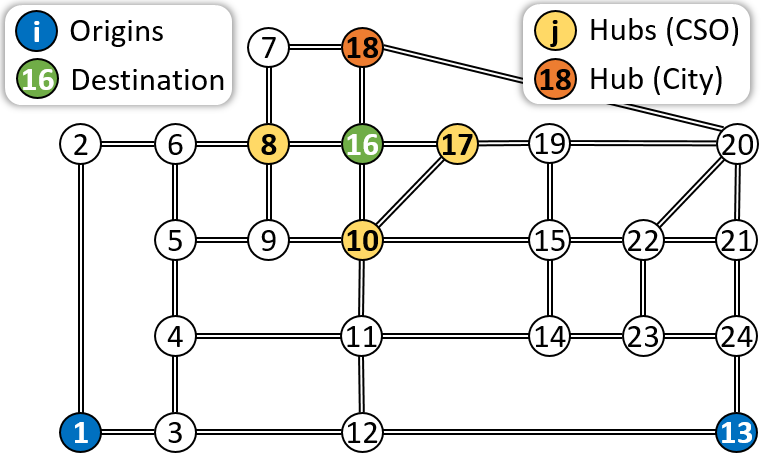}
        \caption{Sioux falls transportation network. \textit{Commuters come from two different origins and have the same destination.
        They choose at which hub to park and the path to get there.}}
\label{fig:transport}
\end{figure}
\section{Case studies}
\label{sec:num}
In this section, Algorithm~\ref{algo:Mitsos} introduced in previous section is applied to our trilevel model to find the optimal strategies for the ENO and the CSO in function of exogenous parameters. 
The parameters of the problem are set as follows, unless otherwise specified: 1500 commuters drive from each origin 1 and 13 (3000 vehicles in total) of Sioux falls transportation network represented in Fig.~\ref{fig:transport}, to destination 16.
More precisely the drivers have to choose at which of the four hubs (at locations 8, 10, 17 and 18, the latter being owned by the city) they want to park and maybe charge.
In Sec.~\ref{sec:Xe}, hubs are supposed equally distant from destination and $t_i=0$ without loss of generality.
The constant charging unit price at city's hub is $\lambda_S^0 = 25$~c\euro/kWh, higher than the one at home, $\lambda_H^0 = 20$~c\euro/kWh.
Half of vehicles are electric (except in Sec.~\ref{sec:Xe}), and the two EV classes $e_0$ and $e_1$ are equally represented, with $s_0 = 5$~kWh and $s_1 = 0$~kWh.
The length of the road between locations 3 and 4 is 2.5~km and the other lengths can be geometrically deduced from it.
For all roads $a$, the speed limit is $v_a=50$~km/h and the road capacity is $C_a=0.2$ (i.e., travel duration triples if 20~\% of the 3,000 vehicles take road $a$).
The values of $\tau=10$~\euro/h, $m_e = 0.2$ kWh/km, $m_g= 0.06$ L/km and $\lambda_g=1.50$~\euro/L are taken from~\cite{sohet2020coupled}.
The four hubs belong to the IEEE 33-bus system illustrated in Fig.~\ref{fig:grid} and whose parameters are given in~\cite{baran1989network}.
In particular, the total nonflexible consumption during working hours near each hub is respectively 1.51, 0.68, 0.45 and 0.45 MWh.
Each hub's total nonflexible consumption is divided into a random profile over $T=8$ time slots.
The upper bounds for the ENO and the CSO's variables are set high enough to contain the optimal values:
 $\overline{\alpha}=10^{-3}$~\euro/kW$^2$ and $\overline{P} = 4$~MW.
The converting parameters are set as follows: $q=0.1$~\euro/kW$^2$, $\overline{q}=3q$ and $\beta = 10^{-3}$~\euro/kVA$^2$.
\tcb{
Finally, the simulated annealing parameters $N_r = 15$ and $\eta = 2.5\times 10^{-6}$~\euro/kW$^2$ have been adjusted with the help of brute-force search, to ensure a sufficient exploration of $\Pi_{\text{up}}$ domain\footnote{\tcb{For example, above $N_r = 15$, the ratio accepted/explored points is no longer acceptable.}}.
}

\tcb{
Before studying the global trilevel model in the next sections, the aggregated charging profiles~\eqref{eq:water_filling} corresponding to the unique charging need $L^*_i$ at equilibrium at each hub $i$ (see Prop.~\ref{prop:WE_same_costs}) are illustrated in Fig.~\ref{fig:profile}.
This figure shows the local water-filling structure of these profiles (referred to as \textit{Trilevel}) for each CSO's hub.
Figure~\ref{fig:profile} also displays the charging profiles obtained solving~\eqref{eq:grid_aware_wf}, as in the LMP+SC method.
Note that this profile is exclusively concentrated on the third time slot due to a lower nonflexible consumption than during the other time slots.
For comparison, the P\&C profile corresponding to $\bm{L}^*$ is also shown.
It typically leads to significantly larger peak powers compared to the proposed water-filling scheduling and, in turn, higher grid costs.
}

\begin{figure}
    \centering
        \includegraphics[width=0.49\textwidth]{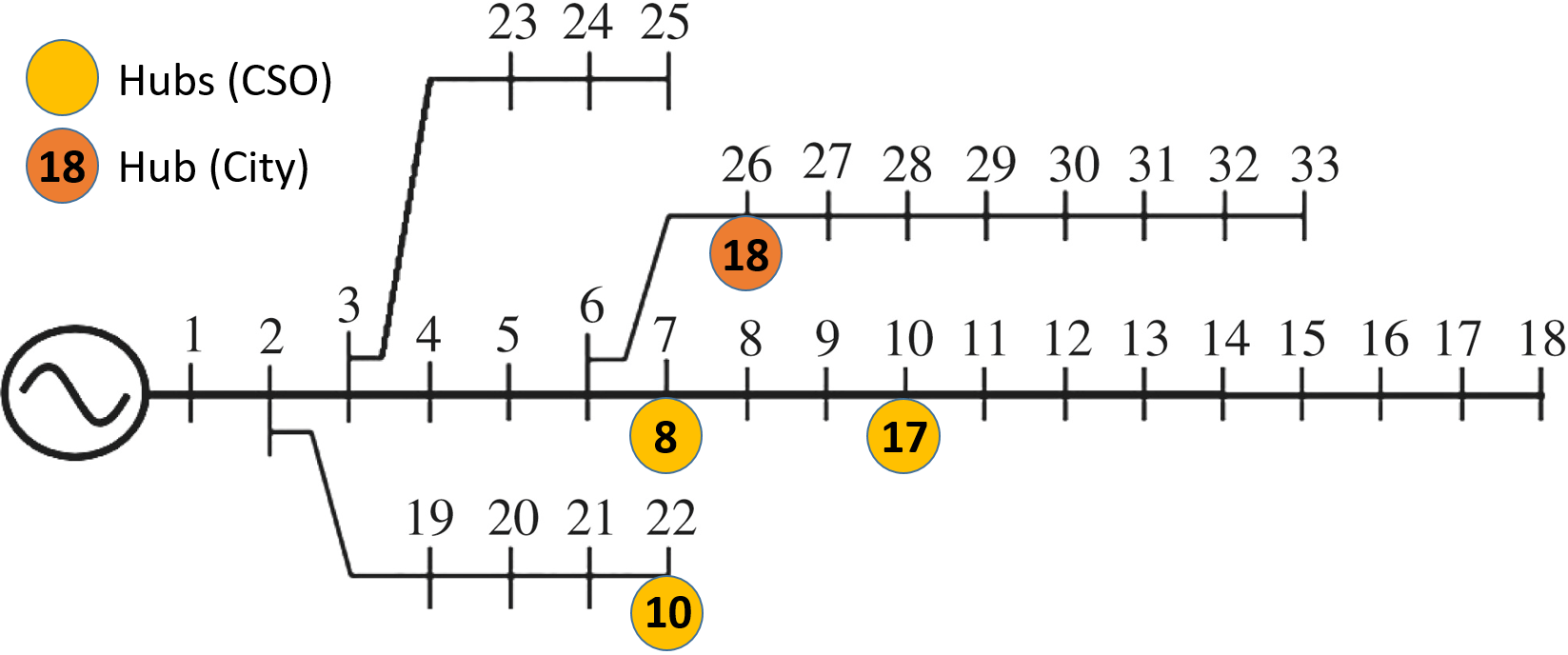}
        \caption{IEEE 33-bus medium-voltage distribution network.}
\label{fig:grid}
\end{figure}
\label{sec:numerical}

\begin{figure}
    \centering
    \includegraphics[width = 0.5\textwidth]{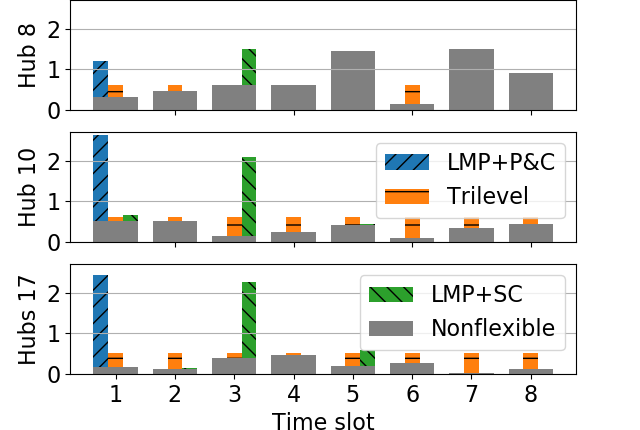}
    \caption{\tcb{Aggregated charging profiles for each CSO's hub, for the water-filling method~\eqref{eq:local} (\textit{Trilevel}), the improved reference method~\eqref{eq:grid_aware_wf} (LMP+SC) and the LMP+P\&C method.}}
    \label{fig:profile}
\end{figure}

\subsection{Sensitivity to Electric Vehicles penetration}
\label{sec:Xe}
\tcg{
Using our trilevel model, operators can find their optimal strategies as the proportion $X_e$ of EVs among vehicles grows.
More precisely, for each $X_e$ value, Algorithm~\ref{algo:Mitsos} gives the corresponding optimal payoffs and strategies for the ENO and the CSO (see Fig.~\ref{fig:obj}).
This figure shows that in general, both payoffs increase with $X_e$, as a higher $X_e$ means more EV charging.
Furthermore, in order to keep affordable charging prices at its hubs, the CSO has to reduce $\alpha$ as $X_e$ increases and amplifies the price incentive part $\mathrm{d}G^*_i/\mathrm{d}L_i$ (see~\eqref{eq:LMP}).
Note that when the number of EVs is high ($X_e\geq85$~\%), the ENO must lower the CSO's contract threshold $P^*$, otherwise the CSO would increase the monetary value $\alpha$ of smart charging to reduce its expensive supplying costs by inciting EVs to rather charge at city's hub.
Thus, the ENO reduces its revenues from CSO's contract so that its payoff stagnates and CSO's payoff considerably increases.
}

\tcg{
For each EV penetration $X_e$, there is a unique charging need at each hub corresponding to vehicles' reaction to optimal strategies of the ENO and the CSO.
Note that the uniqueness of the charging need at city's hub is not guaranteed by Prop.~\ref{prop:WE_same_costs}, but is invalidated only in specific cases (e.g., several city's hubs, specific ratios for roads' lengths and energy prices\dots).
As these charging needs greatly increase with $X_e$, they are normalized by the total charging need aggregated over all hubs to emphasize their relative variations: $\tilde{L}_i = L_i/\sum_j L_j$ (see Fig~\ref{fig:Li}).
Note that different temporal profiles of the same nonflexible consumption (at each hub) lead to similar Fig.~\ref{fig:obj}, but different normalized charging needs $\tilde{L}_i$.
Figure~\ref{fig:Li} shows the $\tilde{L}_i$ for two different nonflexible consumption profiles.
This figure reveals that the choice of hub by EVs depends greatly on the nonflexible consumption when the number of EVs is small, but less so as $X_e$ increases.
As the EV penetration increases, GVs are replaced by EVs, which enables more EVs to use closer hubs to the origins (as 10 and 17), to the detriment of city's hub 18.
Note that fewer EVs choose hub 8 rather than hubs 10 and 17 due to the higher nonflexible consumption there (resp. 1.51 compared to 0.68 and 0.45 MWh).
}

\begin{figure}
    \centering
        \includegraphics[width=0.5\textwidth]{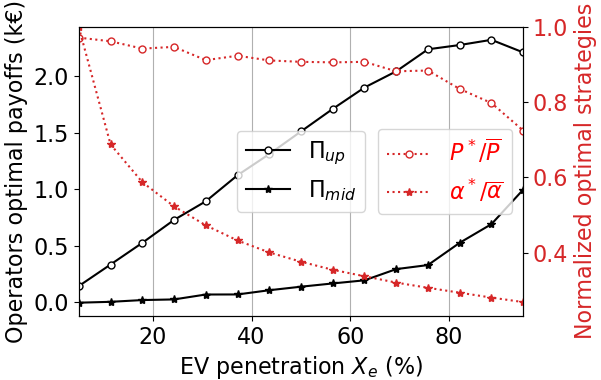}
        \caption{Optimal ENO and CSO's payoffs (resp. $\up$ and $\mi$) and normalized strategies (resp. $P^*/\overline{P}$ and $\alpha^*/\overline{\alpha}$) depending on EV penetration $X_e$.
        \textit{When EV penetration goes over 75~\%, CSO's payoff increases to the detriment of the ENO's because the ENO reduces the CSO's supplying contract.}}
\label{fig:obj}
\end{figure}
\begin{figure}
    \centering
        \includegraphics[width=0.5\textwidth]{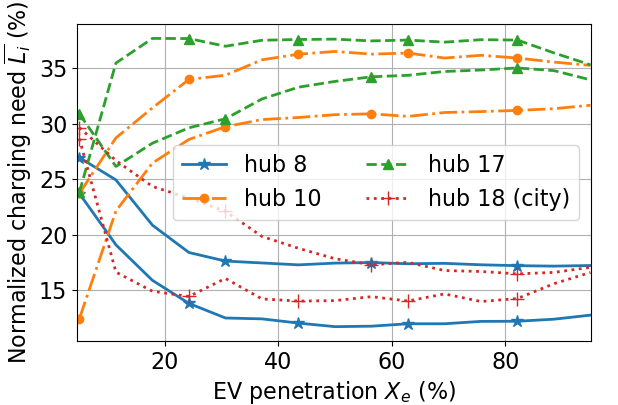}
        \caption{Normalized charging needs $\tilde{L}_i=L_i/\sum_j L_j$ at all hubs depending on EV penetration $X_e$, for two different nonflexible consumption profiles.
        \textit{The profile considered has a significant impact for low EV penetrations.
        For both profiles, $\tilde{L}_8$ and $\tilde{L}_{18}$ decrease with $X_e$ because hub 18 is further away from the origins and hub 8 has a higher nonflexible consumption.}}
\label{fig:Li}
\end{figure}

\begin{figure}
    \centering
        \includegraphics[width=0.5\textwidth]{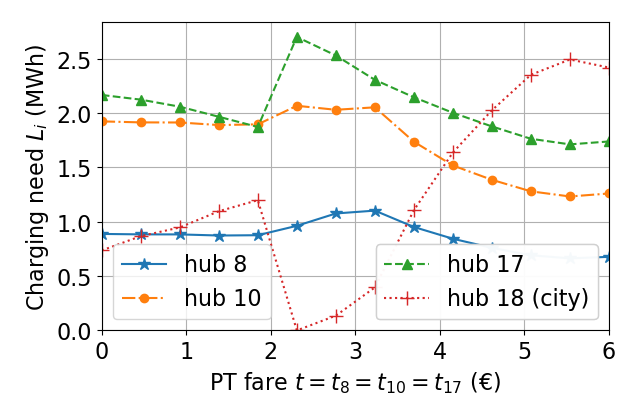}
        \caption{Charging needs $L_i$ at all hubs depending on the unique PT fare $t$.
        \textit{$L_{18}$ globally increases with $t$, except around $t=2$~\euro~where EVs charging at home choose to park at hub 18.}}
\label{fig:LiPT}
\end{figure}

\subsection{Sensitivity to Public Transport fare}
\label{sec:t}
\tcg{
Last section was dedicated to the long-term EV penetration.
This section focuses on the reaction of the ENO and CSO to an incentive coming from the transportation system.
Here, it is supposed that city's hub 18 benefits from a subsidized Public Transport (PT) fare $t_{18} = 1$~\euro.
We consider the PT fare $t$ chosen by a transportation operator and that commuters pay to go from CSO's hubs to the destination: $t=t_8=t_{10}=t_{17}$.
Figure~\ref{fig:LiPT} shows the evolution of charging needs $L_i$ at all hubs $i$ in function of this PT fare.
Note that all EVs of class $e_1$ charge at home: the CSO is better off with high enough charging prices even if it means fewer EVs charging at its hubs.
For PT fares lower than $t=2$~\euro, the number of EVs (of class $e_0$) choosing city's hub increases with $t$. 
Between $t=2$~\euro~and $3$~\euro, this number drops because the PT fare became too expensive for EVs charging at home, which instead all choose paths leading to city's hub 18.
Then however, more and more EVs of class $e_0$ naturally choose the city's hub.
}

\subsection{\tcb{Comparison with iterative method based on literature}}

\tcb{
This section compares the trilevel model built in this paper with the most commonly used model of EV charging incentives in coupled electrical-transportation systems~\cite{alizadeh2016optimal,wei2017network} (see Sec.~\ref{sec:alternative}), on the EV penetration sensitivity example of Sec.~\ref{sec:Xe}.
Figure~\ref{fig:cmp} shows for each EV penetration $X_e$ the grid costs $\mathcal{G}$ (filled black markers) and the charging revenues $\mathcal{R}=\sum_{i\in\mathcal{H}_{\text{cso}}}R_i$ (empty red markers) for the trilevel method (star marker), the improved iterative method (LMP+SC) for two values of $\tilde{\alpha}$, and the LMP+P\&C method for $\tilde{\alpha}=0.01$.
}

\tcb{
Figure~\ref{fig:cmp} shows that for the same $\tilde{\alpha}=0.01$ value, the LMP+P\&C method (diamond marker) gives higher grid costs than the LMP+SC one (square), as expected, but also lower charging revenues:
as grid costs are higher, the charging unit prices too so that EVs prefer to charge at city's hub (up to $X_e=60$~\%, where they accept these high prices because of the congested paths to access city's hub).
The impact of the conversion parameter $\tilde{\alpha}$ is also illustrated in Fig.~\ref{fig:cmp}.
For example, when $\tilde{\alpha}$ is too high (e.g., $\tilde{\alpha}=0.03$), the LMP+SC method (triangle marker) gets similar results as the LMP+P\&C one (diamond).
Note that charging revenues are always higher in the trilevel model of this paper than in the other methods.
This seems intuitive given that this metric is explicitly taken into account in the framework of this paper while the alternative methods focus on grid cost minimization.
}

\tcb{
Figure~\ref{fig:cmp} illustrates that the trilevel model of this paper (star marker) obtains fairly low grid costs compared to the LMP+P\&C method or the LMP+SC one with $\tilde{\alpha}$ not carefully designed.
This indicates that the supplying contract, the proxy used in the scheduling problem~\eqref{eq:local} and the corresponding LMP~\eqref{eq:LMP} are good heuristics to reduce grid costs, as expressed in our previous paper~\cite{sohet2020isgt}.
Note that with a particular value $\tilde{\alpha}=0.01$, the LMP+SC method (square marker) obtains the minimal grid costs.
This is made possible because the goal of the operator choosing the charging profiles and prices in this method is to precisely minimize grid costs.
However in practice, the hubs' operator wants to maximize its payoff and may have no information on the electrical grid, as in the trilevel model of the present paper, which guarantees the highest charging revenues among all methods.
Moreover, the results of the LMP+SC method are highly sensitive to the choice of parameter $\tilde{\alpha}$, as shown in Fig.~\ref{fig:cmp}.
Finally, note that parallel computations are not practical for the iterative methods.
Due to the complexity of solving scheduling problem~\eqref{eq:grid_aware_wf}, the LMP+SC method is actually slower (two times in average) to solve than the trilevel model, which has more optimization layers.
}

\begin{figure}
    \centering
    \includegraphics[width=0.5\textwidth]{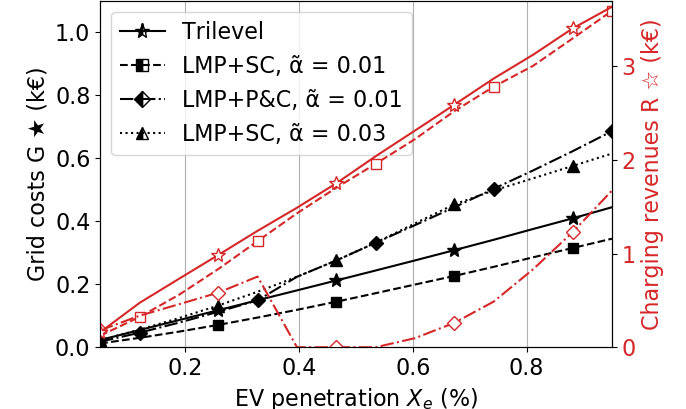}
    \caption{\tcb{ENO grid costs (solid lines) and CSO charging revenues (dashed lines), depending on EV penetration, obtained with our Trilevel method (star marker), the LMP+P\&C one (diamond) and LMP+SC method for different normalizations $\tilde{\alpha}$ of the LMP (square and triangle).
    \textit{
    The literature-based method may lead to minimal grid costs only if smart charging is considered and if $\tilde{\alpha}$ is carefully chosen (square marker).
    Charging revenues are always higher with Trilevel method.}}}
    \label{fig:cmp}
\end{figure}

\section{Conclusion}
In this work, the impact on the electrical system of EV commuting is modeled by a trilevel optimization problem.
The lower, middle and upper levels respectively represent the EVs, interacting in a coupled driving-and-charging congestion game, the CSO which can modify the smart charging prices at its hubs and the ENO which revises the electricity supplying contracts with each hub.
This trilevel problem is seen as a Stackelberg game (between the upper and middle levels) with equilibrium constraints (lower level), which is solved with an optimistic iterative algorithm combined with simulated annealing.
For each ENO and CSO's strategies, we proved that there is a unique charging need at each hub when vehicles are at equilibrium.
The behaviors' coupling between the three levels is illustrated on realistic urban networks, in function of the EV penetration and a transportation incentive.
A comparison with a reference model in the coupled electrical-transportation literature shows the efficiency of the incentives (charging price and supplying contract) in our realistic trilevel model.

In a future work, several CSOs will interact in a game structure, making the trilevel problem an optimization at the upper level, combined with two games both at the middle and the lower levels.
In parallel, a transportation operator (e.g., a public authority responsible for local pollution or dynamic road pricing) will be added to enable a theoretical study of the transportation-electrical coupling.


\appendices

\section{Proof of Prop.~\ref{prop:Beckman}: WE computation}
\begin{proof}
A local minimum of~\eqref{eq:beckmann} verifies the associated Karush–Kuhn–Tucker conditions.
As $\lambda_i$ is proportional to the derivative of $G_i^*$ (see~\eqref{eq:LMP}), these conditions are equivalent to the Definition~\ref{defi:we} of a WE.
See~\cite{sohet2020coupled} for more details.
\end{proof}

\section{Proof of Prop.~\ref{prop:WE_same_costs}: unique charging needs}
\label{appendix:unique}
Proposition~\ref{prop:WE_same_costs} is due to the nondecreasing property of congestion and consumption costs.
The proof of Prop.~\ref{prop:WE_same_costs} requires the following lemma and definition.

\begin{lemme}
For all CSO's strategies $\alpha\in\Y$, the LMP function $\lambda_i^{\alpha}: L_i \mapsto \lambda_i(\alpha, L_i)$ defined in equation (\ref{eq:LMP}) is increasing.
\label{lem:increasing}
\end{lemme}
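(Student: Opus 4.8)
The plan is to show that the function $\lambda_i^\alpha$ is increasing in $L_i$ by analyzing the closed-form expression given in \eqref{eq:LMP}, namely $\lambda_i(\alpha,L_i) = 2\alpha\,(L_i + L^0_{i,t_0})/t_0(L_i)$, and to treat the cases $\alpha = 0$ and $\alpha > 0$ separately. When $\alpha = 0$ the function is identically zero, hence (weakly) nondecreasing; the substantive case is $\alpha > 0$, where it suffices to prove that $L_i \mapsto (L_i + L^0_{i,t_0(L_i)})/t_0(L_i)$ is increasing. First I would recall from the water-filling construction \eqref{eq:val}--\eqref{eq:water_filling} that $t_0(L_i)$ is the (integer-valued, nondecreasing) index determined by $L_i \in \,]\Delta_{t_0};\Delta_{t_0+1}]$ with $\Delta_t = t\,\ell^0_{i,t} - L^0_{i,t}$, and that the quantity $(L_i + L^0_{i,t_0})/t_0$ is precisely the common ``water level'' of the optimal profile, i.e. the value of $\ell^*_{i,t}(L_i) + \ell^0_{i,t}$ for every $t \le t_0$.

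The key steps are as follows. On any maximal interval where $t_0(L_i)$ is constant, say $t_0(L_i) \equiv t_0$, the map $L_i \mapsto (L_i + L^0_{i,t_0})/t_0$ is affine with positive slope $1/t_0$, hence strictly increasing there. The only thing left is to check continuity (monotone pasting) at the breakpoints $L_i = \Delta_{t_0+1}$ where $t_0$ jumps from $t_0$ to $t_0+1$. At such a breakpoint I would evaluate the water level from the left, $(\Delta_{t_0+1} + L^0_{i,t_0})/t_0$, and from the right, $(\Delta_{t_0+1} + L^0_{i,t_0+1})/(t_0+1)$; substituting $\Delta_{t_0+1} = (t_0+1)\ell^0_{i,t_0+1} - L^0_{i,t_0+1}$ and using $L^0_{i,t_0+1} = L^0_{i,t_0} + \ell^0_{i,t_0+1}$ shows both one-sided limits equal $\ell^0_{i,t_0+1}$, which is exactly the value forced by the definition of $\Delta_{t_0+1}$ (the water level just reaches the next nonflexible step). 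Hence the function is continuous, piecewise affine with strictly positive slopes, and therefore strictly increasing on its whole domain; multiplying by $2\alpha > 0$ preserves this, proving the lemma. (For completeness one notes $t_0(L_i) \ge 1$ always, so there is no division by zero, and that the assumption $\ell^0_{i,t}$ increasingly sorted guarantees $\Delta_{t_0} < \Delta_{t_0+1}$ so the intervals are genuine.)

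I expect the main obstacle to be the bookkeeping at the breakpoints: one must be careful that the index $t_0$ is defined by a half-open interval and that the ``water level'' interpretation is invoked correctly, so that the left and right values at a jump genuinely coincide rather than producing a downward jump. A secondary point worth stating explicitly is monotonicity of $t_0(L_i)$ itself (immediate from $\Delta_t$ being increasing in $t$ under the sorting assumption), since the argument implicitly uses that larger $L_i$ never decreases $t_0$. Once these are in place the computation is routine; no convexity of $G^*_i$ is needed for the lemma, although it is consistent with $G^*_i$ being convex, since $\lambda_i^\alpha$ is (a scalar multiple of) its derivative.
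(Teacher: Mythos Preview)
Your proposal is correct and follows essentially the same route as the paper's own proof: show that $\lambda_i^{\alpha}$ is piecewise affine in $L_i$ with slope $2\alpha/t_0>0$ on each interval where $t_0$ is constant, then verify continuity at the breakpoints $\Delta_t$ by computing that both one-sided values of the water level equal $\ell^0_{i,t}$ (your $\ell^0_{i,t_0+1}$). The additional bookkeeping you include (the $\alpha=0$ case, $t_0\ge 1$, monotonicity of $t_0$) is a bit more careful than the paper but not different in spirit.
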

\begin{proof}
LMP function $\lambda_i^{\alpha}: L_i \mapsto 2\alpha \frac{L_i + L^0_{i,t_0}}{t_0(L_i)}$ is piecewise differentiable for all $\alpha \in\Y$, with derivative $2\alpha/t_0(L_i) > 0$.
We can conclude that $\lambda_i^{\alpha}$ is increasing by showing that it is continuous:
if $L_i^+ = \Delta_t^+$ then $t_0(L_i^+) = t$ and $L_i^+ + L_{i,t}^0 = t\ell^0_{i,t}$ by definition of $\Delta_t$.
Similarly, if $L_i^- = \Delta_t^-$, then $t_0(L_i^-)= t-1$ and $L_i^- + L_{i,t-1}^0 = t\ell^0_{i,t} - L^0_{i,t} + L_{i,t-1}^0 = (t-1)\ell^0_{i,t}$ by definition of $L^0_{i,t}$.
Therefore, $\lambda_i^{\alpha}(L_i^+) = \lambda_i^{\alpha}(L_i^-) = \ell^0_{i,t}$.
\end{proof}

\begin{defi}[\textbf{Variational Inequality}]
Let $Y\subseteq \mathbb{R}^N$ be a
nonempty, closed and convex set.
A vector $\bm{x}\in Y$ is a solution of the Variational Inequality VI($\bm{c}$, Y) if, for any vector $\bm{y}\in Y$:
\begin{equation}
\bm{c}\left(\bm{x}\right)^T \left(\bm{y}-\bm{x}\right) \geq 0\,.
\label{eq:VI}
\end{equation}
\end{defi}

\begin{proof}[Proof of Prop.~\ref{prop:WE_same_costs}]
Let $\bm{x},\bm{y}\in X$ be two WE of game $\mathbb{G}$.
As functions $d_a$ and $\lambda_i$ (Lemma~\ref{lem:increasing}) are increasing, we have:
\begin{equation*}
\left[\bm{c}\left(\bm{x}\right) - \bm{c}\left(\bm{y}\right)\right]^T \left(\bm{x}-\bm{y}\right) =
 \sum_a \left(x_a - y_a\right)\left(d_a(x_a) - d_a(y_a)\right)
\end{equation*}
\begin{equation*}
     + \sum_{i\in\mathcal{H}_{\text{cso}}}\big(L_i(\bm{x})-L_i(\bm{y})\big) \Big(\lambda_i^{\alpha}\big(L_i(\bm{x})\big)-\lambda_i^{\alpha}\big(L_i(\bm{y})\big)\Big) \geq 0
\,,
\end{equation*}
which is equal to 0 if and only if~\eqref{eq:property} holds.

According to~\cite{smith1979existence}, WE $\bm{x}$ and $\bm{y}$ are solutions of $VI(\bm{c}, X)$.
Equation~\eqref{eq:VI} applied to $(\bm{x},\bm{y})$ and $(\bm{y},\bm{x})$ results in:
\begin{equation*}
\big(\bm{c}\left(\bm{x}\right) - \bm{c}\left(\bm{y}\right)\big)^T \left(\bm{x}-\bm{y}\right) ~~\leq ~~0\,.
\vspace{-5mm}
\end{equation*}
\end{proof}
\vspace{-2mm}
\section{Proof of Prop.~\ref{prop:mistros}: convergence of Algorithm~\ref{algo:Mitsos}}
\begin{proof}
\label{appendix:mistros}
According to the maximum theorem (Beckmann function $\mathcal{B}$ continuous), the mapping $\bm{x}^*(\alpha)$ solution of~\eqref{eq:beckmann} is upper hemicontinuous. As for a given $\alpha$, all $\bm{x}^*(\alpha)$ lead to the same $\bm{L}^*(\alpha)$, function $L^*(\alpha)$ and therefore $\mi$ are continuous.
The same theorem states that $\MI(P)$ is continuous because $\mi$ is.
As functions $\mi$ and $\MI$ are continuous respectively on compacts $\Y\times\X$ and $\X$, they are uniformly continuous according to Heine–Cantor theorem, which gives $\delta_{\varepsilon}$ and $\overline{\delta}_{\varepsilon}$ verifying respectively:
\begin{equation*}
    \forall (\alpha_0,P_0), (\alpha_1,P_1)\in \Y\times\X~\text{s.t. } \|(\alpha_0,P_0)-(\alpha_1,P_1)\|\leq \delta_{\varepsilon}\,,
\end{equation*}
    \begin{equation}
\mi(\alpha_1\,,P_1) \geq \mi(\alpha_0\,,P_0) - \frac{\epsmi}{3}
\label{eq:uniform}
\end{equation}
\begin{equation*}
  \forall P_0,P_1\in\X ~\text{s.t. } |P_0-P_1|\leq \overline{\delta}_{\varepsilon}\,,~~
     \MI(P_0) \geq \MI(P_1) - \frac{\epsmi}{3}.
\end{equation*}
Let $\delta = \min(\delta_{\varepsilon}, \overline{\delta}_{\varepsilon})$.
As $\mathcal{P}$ is compact, the sequence $\left(P_k\right)$ built at each iteration of Algorithm~\ref{algo:Mitsos} by~\relaxed~admits a subsequence $\left(P_{u(n)}\right)$ which converges to $P_{\text{lim}}$.
Then, by definition:
\vspace{-1mm}
\begin{equation*}
    \exists N_{\delta}\in\mathbb{N}^*~\text{ s.t. }~\forall n\geq N_{\delta}\,,\quad |P_{u(n)}-P_{\text{lim}}|\leq \frac{\delta}{2}\,.
\end{equation*}
Let $k = u(N_{\delta}),K = u(N_{\delta}+1)$.
Then $|P_{k}-P_{K}|\leq \delta$, so that combining~\eqref{eq:uniform} with $(\overline{\alpha}_k,P_{k}),(\overline{\alpha}_k,P_{K})$ gives:
\begin{equation*}
    \mi(\overline{\alpha}_k,P_{K})\geq \mi(\overline{\alpha}_k, P_{k}) -\frac{\epsmi}{3} \geq \MI(P_{K}) - \frac{2}{3}\epsmi,
\end{equation*}
with $\overline{\alpha}_k$ given by~\alphak~at iteration $k$.
Finally, as $(P_{K}, \alpha_{K})$ verifies constraint $l=k$ of~\relaxed, we have:
\begin{equation*}
    \mi(\alpha_{K},P_{K}) \geq \mi(\overline{\alpha}_{k}, P_{K})-\frac{\epsmi}{3}\,,
\end{equation*}
\begin{equation*}
\text{thus} \quad
    \mi(\alpha_{K},P_{K}) \geq\MI(P_{K}) - \epsmi\,,
\end{equation*}
which means that the stopping criteria is reached after iteration $K$, and Algorithm~\ref{algo:Mitsos} ends with $(P_{K}, \alpha_{K})$ solution of~\eqref{eq:bilevel}.
\end{proof}

\vspace{-4mm}
\bibliographystyle{ieeetr} 
\bibliography{myrefs}

\vspace{-10mm}
\begin{IEEEbiography}[{\includegraphics[width=1in,height=1.25in,clip,keepaspectratio]{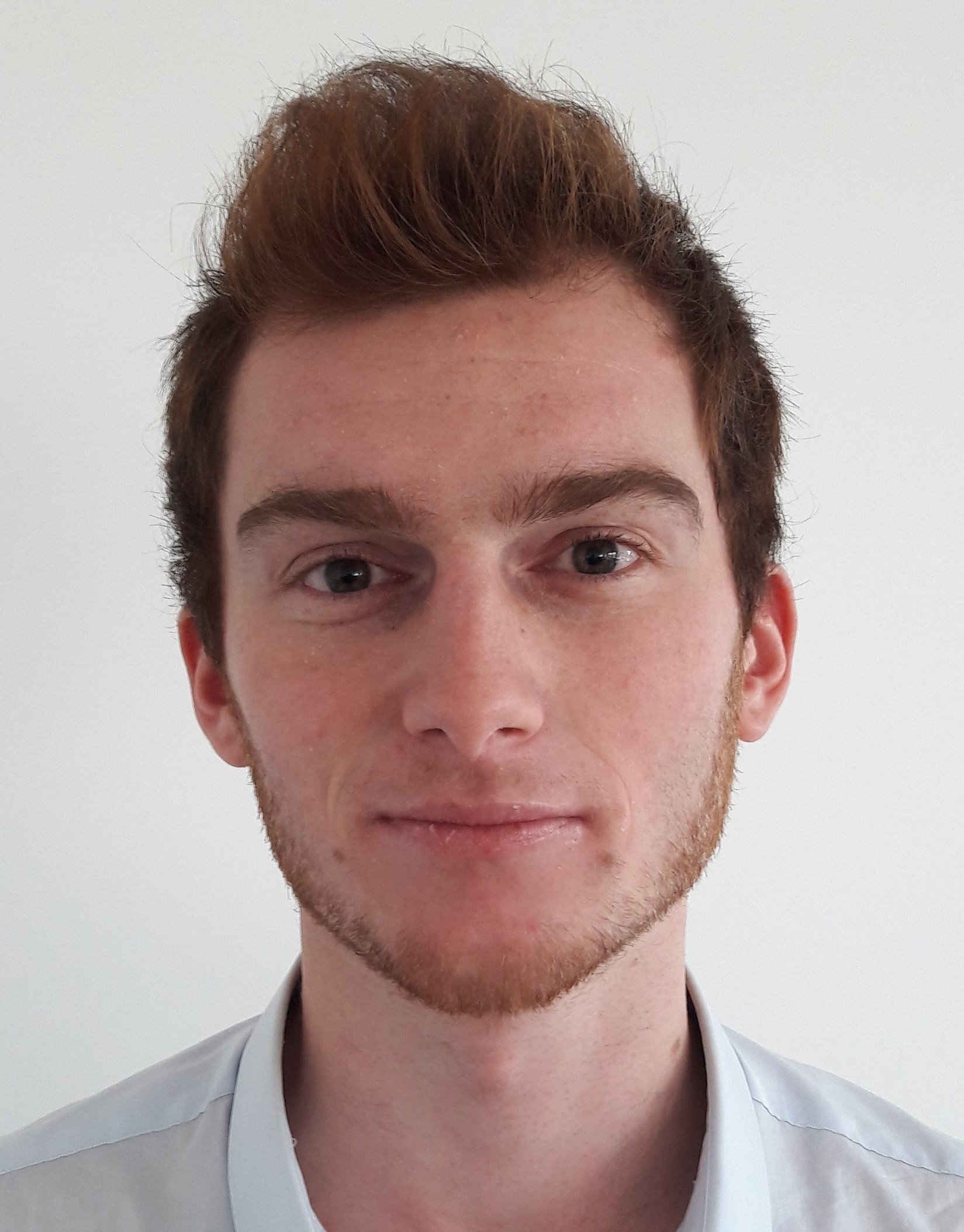}}]{Benoit Sohet}
graduated from \'Ecole Normale Sup\'erieure de Cachan in 2018.
He received the M.Sc. degree in climate physics at Universit\'e de Paris-Saclay in 2017 and the M.Sc. degree in applied mathematics at ENSTA ParisTech in 2018.
He is currently pursuing a Ph.D. in applied mathematics at EDF R\&D and at Avignon Universit\'e.
His research is related to game theory and its applications to the electrical and transportation systems.
\end{IEEEbiography}

\vspace{-10mm}
\begin{IEEEbiography}[{\includegraphics[width=1in,height=1.25in,clip,keepaspectratio]{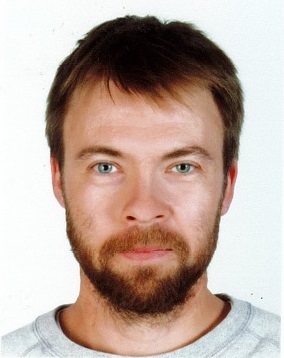}}]{Yezekael Hayel}
(M’08, SM'17) received the M.Sc. degree in computer science and applied mathematics from the University of Rennes~1 in 2002, and the Ph.D. degree in computer science from the University of Rennes~1 and INRIA in 2005. He is an Assistant/Associate Professor with the University of Avignon, France, since 2006. He has held a tenure position (HDR) since 2013. He was a Visiting
Professor with the NYU Polytechnic School of Engineering from 2014 to 2015. He was the Head of the Computer Science/Engineering Institute with the University of Avignon from 2016 to 2019. His research interests include the performance
evaluation and optimization of complex network systems based on game theoretic and queuing models. He was involved at applications in communication/ transportation and social networks, such as wireless flexible networks, bio-inspired and self-organizing networks, and economic models of the Networks. He is associate editor of the GAMES journal.
\end{IEEEbiography}

\vspace{-10mm}
\begin{IEEEbiography}[{\includegraphics[width=1in,height=1.25in,clip,keepaspectratio]{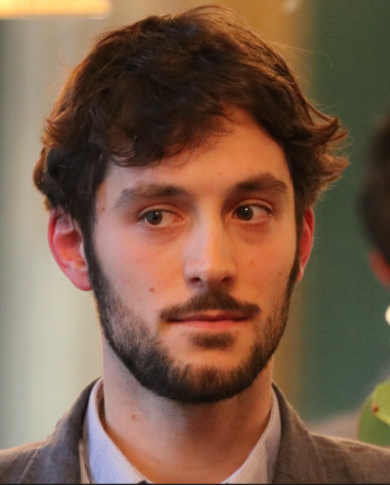}}]{Olivier Beaude}
received the M.Sc. degree in applied mathematics and economics from \'Ecole polytechnique, Palaiseau, France, in 2010, the M.Sc. degree in optimization, game theory, and economic modeling from Universit\'e Paris VI, Paris, France, in 2011, and the Ph.D. degree from the Laboratory of Signals and Systems, CentraleSup\'elec, and the Research and Development Center, Renault, France, in 2015. He is currently a Research Engineer with EDF Research and Development, where his research interests include controlling the impact of EV charging on the grid, and the development of coordination mechanisms, and incentives for local electricity systems.
\end{IEEEbiography}

\vspace{-10mm}
\begin{IEEEbiography}[{\includegraphics[width=1in,height=1.25in,clip,keepaspectratio]{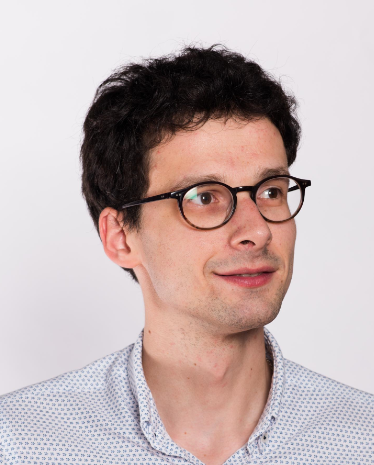}}]{Alban Jeandin}
holds an Engineering degree from T\'el\'ecom Paris and \'Ecole Nationale des Ponts et Chauss\'ees, with specialization in telecommunications, intelligent transportation systems and smart cities. He joined EDF R\&D in 2011. From 2011 to 2016, he was involved in the French smart metering roll out program. He is currently leading EDF R\&D’s research project on vehicle-grid integration and smart charging. His research interests include Smart Metering, Smart Grids, and impact of EV charging on the grid, data exchange standards for Smart Charging and V2G, and Smart Cities.
\end{IEEEbiography}
\vfill

\end{document}